\DeclareSymbolFont{bbold}{U}{bbold}{m}{n}
\DeclareSymbolFontAlphabet{\mathbbm}{bbold}
\definecolor{lightgrey}{rgb}{1,0.95,0.95}
\definecolor{darkgrey}{rgb}{0.6,0.6,0.6}
\newcommand{\BB}{\mathsf B}
\newcommand{\CC}{\mathsf C}
\newcommand{\DD}{\mathsf D}
\newcommand{\PP}{\mathsf P}
\newcommand{\QQ}{\mathsf Q}
\newcommand{\UU}{\mathsf U}
\newcommand{\0}{\mathbf 0}
\renewcommand{\1}{\mathbf 1}
\renewcommand{\2}{\mathbf 2}
\newcommand{\BBB}{\mathcal{B}}
\newcommand{\CCC}{\mathcal{C}}
\newcommand{\DDD}{\mathcal{D}}
\newcommand{\FFF}{\mathcal{F}}
\newcommand{\GGG}{\mathcal{G}}
\newcommand{\LLL}{\mathcal{L}}
\newcommand{\PPP}{\mathcal{P}}
\newcommand{\cc}{\mathfrak{c}}
\newcommand{\res}{\upharpoonright}					
\newcommand{\cp}[1]{\left( #1 \right)}
\newcommand{\Qp}[1]{\left\llbracket #1 \right\rrbracket}
\newcommand{\ap}[1]{\langle #1 \rangle}
\newcommand{\bp}[1]{\left\lbrace #1 \right\rbrace}
\newcommand{\vp}[1]{\left\lvert #1 \right\rvert}
\newcommand{\dirlim}{\varinjlim}
\newcommand{\invlim}{\varprojlim}
\newcommand{\rcslim}{\lim\limits_{\sf rcs}}
\DeclareMathOperator{\cof}{cof}
\DeclareMathOperator{\dom}{dom}
\DeclareMathOperator{\val}{val}
\DeclareMathOperator{\trcl}{trcl}
\DeclareMathOperator{\rank}{rank}
\DeclareMathOperator{\crit}{crit}
\DeclareMathOperator{\set}{set}
\DeclareMathOperator{\class}{class}
\DeclareMathOperator{\Coll}{Coll}
\DeclareMathOperator{\Add}{Add}
\DeclareMathOperator{\cd}{cd}
\DeclareMathOperator{\dd}{dd}
\DeclareMathOperator{\cpd}{cpd}
\newcommand{\RO}{\ensuremath{\text{{\sf RO}}}}
\newcommand{\ON}{\ensuremath{\text{{\sf ON}}}}
\newcommand{\SP}{\ensuremath{\text{{\sf SP}}}}
\newcommand{\SSP}{\ensuremath{\text{{\sf SSP}}}}
\newcommand{\ALL}{\ensuremath{\Omega}}
\DeclareMathOperator{\UP}{UP}
\DeclareMathOperator{\AD}{AD}
\newcommand{\ADF}{\AD(\Delta^0_1)}
\DeclareMathOperator{\ZF}{ZF}
\DeclareMathOperator{\ZFC}{ZFC}
\DeclareMathOperator{\NBG}{NBG}
\DeclareMathOperator{\MK}{MK}
\DeclareMathOperator{\CH}{CH}
\DeclareMathOperator{\GCH}{GCH}
\DeclareMathOperator{\RA}{RA}
\DeclareMathOperator{\wRA}{wRA}
\DeclareMathOperator{\FA}{FA}
\DeclareMathOperator{\BFA}{BFA}
\DeclareMathOperator{\MM}{MM}
\DeclareMathOperator{\BMM}{BMM}
\DeclareMathOperator{\AX}{AX}
\newcommand{\UnderTilde}[1]{{\setbox1=\hbox{$#1$}\baselineskip=0pt\vtop{\hbox{$#1$}\hbox to\wd1{\hfil$\sim$\hfil}}}{}}
\theoremstyle{plain}
	\newtheorem*{theorem*}{Theorem}
	\newtheorem{theorem}{Theorem}[section]
	\newtheorem{proposition}[theorem]{Proposition}
	\newtheorem{lemma}[theorem]{Lemma}
	\newtheorem{corollary}[theorem]{Corollary}
\theoremstyle{definition}
	\newtheorem{definition}[theorem]{Definition}
	\newtheorem{question}[theorem]{Question}
\theoremstyle{remark}
	\newtheorem{remark}[theorem]{Remark}
\begin{document}

	\title{Absoluteness via Resurrection}
	\author{Giorgio Audrito, Matteo Viale}
	\date{}
	\maketitle

	\begin{abstract}
		The resurrection axioms are forcing axioms introduced recently by Hamkins and Johnstone, 
		developing on ideas of Chalons and Veli\v ckovi\'c. We introduce a stronger form of resurrection 
		axioms (the \emph{iterated} resurrection axioms $\RA_\alpha(\Gamma)$ for a class of forcings 
		$\Gamma$ and a given ordinal $\alpha$), and show that $\RA_\omega(\Gamma)$ implies
		generic absoluteness for the first-order theory of $H_{\gamma^+}$ with respect to forcings in 
		$\Gamma$ preserving the axiom, where $\gamma=\gamma_\Gamma$ is a cardinal which 
		depends on $\Gamma$ ($\gamma_\Gamma=\omega_1$ if $\Gamma$ is any among the 
		classes of countably closed, proper, semiproper, stationary set preserving forcings).
		
		We also prove that the consistency strength of these axioms is below that of a Mahlo cardinal 
		for most forcing classes, and below that of a stationary limit of supercompact cardinals for 
		the class of stationary set preserving posets. Moreover we outline that simultaneous 
		generic absoluteness for $H_{\gamma_0^+}$ with respect to $\Gamma_0$ and for 
		$H_{\gamma_1^+}$ with respect to $\Gamma_1$ with 
		$\gamma_0=\gamma_{\Gamma_0}\neq\gamma_{\Gamma_1}=\gamma_1$ is in principle possible, 
		and we present several natural models of the Morse Kelley set theory where this phenomenon occurs 
		(even for all $H_\gamma$ simultaneously).
		Finally, we compare the iterated resurrection axioms (and the generic absoluteness results 
		we can draw from them) with a variety of other forcing axioms, and also 
		with the generic absoluteness results by Woodin and the second author. 
	\end{abstract}

	\section{Introduction} \label{sec:introduction}

	It is a matter of fact that forcing is one of the most powerful tools to produce consistency results in set theory: forcing axioms turn it into a powerful instrument to prove theorems. This is done by showing that a statement $\phi$ follows from an extension $T$ of $\ZFC$ if and only if $T$ proves that $\phi$ is consistent by means of a certain type of forcing.
	These types of results are known in the literature as generic absoluteness results and have the general form of a completeness theorem for some $T\supseteq\ZFC$ with respect to the semantics given by Boolean-valued models and first-order calculus. More precisely, generic absoluteness theorems fit within the following general framework:
	
		\begin{quote}
			Assume $T$ is an extension of $\ZFC$, $\Theta$ is a family of first-order formulae in the language of set theory and $\Gamma$ is a certain  definable class of forcing notions.
			Then the following are equivalent for any $\phi \in \Theta$ and $S\supseteq T$:
			\begin{enumerate}
				\item $S$ proves $\phi$.
				\item $S$ proves that there exists a forcing $\BB \in\Gamma$ such that $\BB$ forces $\phi$ and $T$ jointly.
				\item $S$ proves that $\BB$ forces $\phi$ for all forcings $\BB\in\Gamma$ such that $\BB$ forces $T$.
			\end{enumerate}
		\end{quote}
		
	We say that a definable structure $M$ is generically invariant with respect to forcings in 
	$\Gamma$ and parameters in $X\subset M$, when the above situation occurs with $\Theta$ 
	being the first-order theory of $M$ with parameters in $X$.
	A brief overview of the main known generic absoluteness results is the following:
	
	\begin{itemize}
			\item
			Shoenfield's absoluteness theorem is a generic absoluteness result for $\Theta$ the family of $\Sigma^1_2$-properties with real parameters, $\Gamma$ the class of all forcings, $T=\ZFC$.
	
			\item
			The pioneering ``modern'' generic absoluteness results are Woodin's proofs of the invariance under set forcings of the first-order theory of $L(\ON^\omega)$ with real parameters in $\ZFC+$\emph{class-many Woodin cardinals which are limit of Woodin cardinals} \cite[Thm. 3.1.2]{larson:stationary_tower} and of the invariance under set forcings of the family of $\Sigma^2_1$-properties with real parameters in the theory $\ZFC+\CH+$\emph{class-many measurable Woodin cardinals} \cite[Thm. 3.2.1]{larson:stationary_tower}.
	
			Further results pin down the exact large cardinal strength of the assertion that $L(\mathbb{R})$ is generically invariant with respect to certain classes of forcings (among others see~\cite{bosch:forcing_solovay, bosch:solovay_forcing, bosch:projective_absoluteness, prisco:partition_reals, neeman:absoluteness, schindler:proper_remarkable1, schindler:proper_remarkable2}).
	
			\item
			The bounded forcing axioms $\BFA_{\omega_1}(\Gamma)$ for $\Gamma$ among the classes 
			of proper, semiproper, stationary set preserving forcings are equivalent to the statement that 
			generic absoluteness holds for $T=\ZFC$ and $\Theta$ 
			the class of $\Sigma_1$-formulae with parameters in $\PPP(\omega_1)$, 
			as shown in \cite{bagaria:bfa_absoluteness}.
	
			\item
			Recently, Hamkins and Johnstone \cite{hamkins:resurrection_uplifting} introduced the 
			resurrection axioms $\RA(\Gamma)$ and Viale \cite{viale:mm_revisited} showed that 
			these axioms produce generic absoluteness for  $\Theta$ the $\Sigma_2$-theory with 
			parameters of $H_{\cc}$, $T=\ZFC+\RA(\Gamma)$, $\Gamma$ any of the standard classes of 
			forcings closed under two step iterations.
	
			\item
			Viale introduced the forcing axiom $\MM^{+++}$ (a natural strenghtening of Martin's maximum 
			$\MM$) and proved that $L(\ON^{\omega_1})$ with parameters in $\PPP(\omega_1)$
			is generically invariant with respect to stationary set preserving ($\SSP$) forcings for
			\[
			T=\ZFC+\MM^{+++}+\text{there are class-many superhuge cardinals}.
			\]
	\end{itemize}
	
	Motivated by the latter results as well as by the work of Tsaprounis \cite{tsaprounis:on_resurrection}, 
	we introduce over the theory $\MK$ (i.e., the Morse-Kelley set theory with sets and classes) 
	a new natural class of forcing axioms: the iterated resurrection axioms $\RA_{\alpha}(\Gamma)$ 
	of increasing strength as $\alpha$ runs through the ordinals, 
	with $\Gamma$ a definable class of forcing notions. We also remark that for most classes $\Gamma$, 
	$\RA_1(\Gamma)$ is a slight strengthening of the resurrection axiom $\RA(\Gamma)$ 
	recently introduced by Hamkins and Johnstone in \cite{hamkins:resurrection_uplifting}.
	
	We are able to prove over $\MK$ the consistency relative to large cardinal axioms of the axioms 
	$\RA_{\alpha}(\Gamma)$ for (essentially) any definable class of forcing notions $\Gamma$ which 
	is \emph{weakly iterable} (i.e., 
	closed under two-step iterations and having an iteration strategy allowing limits of arbitrary 
	length to be in $\Gamma$, 
	see  Def. \ref{def:weak_iterable}).
	The latter is a property of classes of forcings which holds for most standard classes such as  
	the class $\Omega$ of all forcing notions, the classes $\Gamma_\kappa$ of ${<}\kappa$-closed forcings for any 
	regular cardinal $\kappa$, the classes of 
	 axiom-$A$, proper, semiproper ($\SP$), stationary set preserving ($\SSP$) forcings
	 (the latter class being weakly iterable only in 
	 the presence of sufficiently strong large cardinal axioms).
	
	The main motivation leading to the axioms $\RA_{\alpha}(\Gamma)$ with $\alpha\geq\omega$ is 
	the following generic absoluteness result over the theory $\MK$.
	
	\begin{theorem*}
		Let $V$ be a model of $\MK$, $\Gamma$ be a definable class of forcing notions 
		closed under two step iterations.
		Assume that there exists a largest cardinal $\gamma=\gamma_\Gamma$ 
		which is preserved by any forcing in $\Gamma$, and that all ${<}\gamma$-closed forcings belong to 
		$\Gamma$.
		Then 
		\[
		H_{\gamma^+}^V \prec H_{\gamma^+}^{V^\BB}
		\]
		whenever $\RA_\omega(\Gamma)$ holds in $V$ and $\BB \in \Gamma$ forces 
		$\RA_\omega(\Gamma)$. 
	\end{theorem*}
	
	This is a generic absoluteness result for $T=\MK+\RA_\omega(\Gamma)$ and $\Theta$ the first-order theory of $H_{\gamma_\Gamma^+}$ with parameters. We also prove that the consistency strength of the axioms $\RA_\alpha(\Gamma)$ is below that of a Mahlo cardinal for all relevant $\Gamma$ and for all $\alpha$ except for $\Gamma=\SSP$, in which case our upper bound is below a Mahlo cardinal which is a limit of supercompact cardinals.
		
	Furthermore, the axioms $\RA_\alpha(\Gamma)$ fit naturally within the hierarchy given by a variety of other
	forcing axioms, for example: $\MM^{+++} \Rightarrow \RA_\alpha(\SSP)$ and 
	$\RA_\alpha(\Gamma) \Rightarrow \RA(\Gamma)+\BFA(\Gamma)$ for most $\Gamma$ and for all $\alpha>0$.
	We also outline that $\RA_\omega(\ALL)$ (where $\ALL$ denotes the class of all forcings) is a natural weakening of Woodin's generic absoluteness results for $L(\mathbb{R})$, and $\RA_\alpha(\SSP)$ a natural strenghtening of $\BMM$ for all $\alpha>0$.
	
	These results cannot be formulated in $\ZFC$ alone since the iterated resurrection axioms $\RA_{\alpha}(\Gamma)$ are in essence second-order statements. However, it could be possible that some natural theory strictly weaker than $\MK$ (e.g., $\NBG$ together with a truth predicate) would suffice to carry out all the arguments of the present paper.
	On the other hand the axioms $\RA_n(\Gamma)$ for $n < \omega$ can also be formulated as a $\ZFC$ first-order sentence equivalent to their second-order definition in $\MK$, and the axiom $\RA_\omega(\Gamma)$ can be formulated as a first-order axiom schema $\bp{\RA_n(\Gamma): ~ n < \omega}$ in the $\ZFC$-setting (assuming that $\Gamma$ is definable in $\ZFC$, which is true for all the relevant cases).
	
	Altogether these results show the effectiveness of the axioms $\RA_\omega(\Gamma)$ both on the \emph{premises} side (low consistency strength, natural generalization of well-known axioms) and on the \emph{consequences} side (generic invariance of $H_{\gamma_\Gamma^+}$).
	However, the axioms $\RA_\omega(\Gamma)$ are pairwise incompatible for most different choices of $\Gamma$ with the same $\gamma_\Gamma$: for example, $\RA_\omega(\SSP)+\textit{there are class-many supercompact cardinals}$ implies that canonical functions are a dominating family, 
	whereas $\RA_\omega(\text{proper})$ implies that they are not dominating \cite[Fact 5.1]{viale:mm_revisited}. 
	These two latter assertions are expressible as $\Pi_2$ (or $\Sigma_2$) 
	properties on $H_{\omega_2}$ with parameter $\omega_1$.
	
	We are eager to accept the position that new ``natural'' axioms should be added to set theory in order to settle the undecidability phenomenon, and we consider the iterated resurrection axioms among the candidates for these new ``natural'' axioms (we will expand on this bold assertion in the concluding part of this paper). 
	Even assuming this philosophical position regarding mathematical truth,
	the above results outline that  we need some other philosophical criterion to select which among the various consistent axioms $\RA_\alpha(\Gamma)$ with a fixed $\gamma_\Gamma$ is the most reasonable candidate to supply a ``new natural axiom for set theory''. 
	Likewise we also need some other guidance to select for each cardinal $\kappa$ which class 
	$\Gamma$ with $\gamma_\Gamma=\kappa$ could be the most natural largest class for which 
	$\RA_\alpha(\Gamma)$ can be predicated (if such a class exists at all). 
	Towards this aim, we remark the following three special cases:
	
	\begin{itemize}
		\item The axiom $\RA_\omega(\ALL)$ is consistent and provides generic invariance for the theory of
		$H_{\omega_1}$ with respect to any forcing preserving $\RA_\omega(\ALL)$ 
		(recall that $\ALL$ for us denotes the class of all set-sized forcings).
		
		\item If we focus on forcing classes $\Gamma$ whose corresponding $\gamma_\Gamma$ is 
		$\omega_1$, there is a unique largest class 
		(the class of stationary set preserving posets $\SSP$) which contains all the possible classes 
		$\Gamma$ for which the axiom $\RA_\omega(\Gamma)$ is consistent. 
		Thus $\RA_\omega(\SSP)$ gives the strongest form of generic absoluteness for
		$H_{\omega_2}$ 
		which can be instantiated by means of the iterated resurrection axioms for these type of 
		forcing classes $\Gamma$. Moreover we will show that 
		$\RA_\omega(\SSP)$ and $\RA_\omega(\ALL)$ are jointly
		consistent.
		
		\item On the other hand we are still in the dark regarding which could be (or even if it can exists) 
		the most natural largest class 
		$\Gamma$ having $\gamma_\Gamma>\omega_1$ for which $\RA_\omega(\Gamma)$ is consistent.
		Nonetheless if we consider the forcing classes of ${<}\omega_\alpha$-closed forcings for $\alpha \in \ON$, 
		the corresponding resurrection axioms are all pairwise compatible. For example, from a Mahlo cardinal 
		(existing in a model of $\RA_\omega(\ALL)+\RA_\omega(\SSP)$) 
		it is possible to obtain the consistency of the theory:
		
		\emph{
		\begin{quote}
		$\MK + \GCH^{>\omega_1} + \RA_\omega(\ALL)+\RA_\omega(\SSP)+$\\
		$\RA_\alpha({<}\kappa\text{-closed})$ for 
		all cardinals $\kappa>\omega_1$ and ordinals $\alpha$. 
		\end{quote}
		}
		
		\smallskip
		
		This gives a global and uniform generic absoluteness result, i.e., in this model, given any forcing 
		$\BB$, we have that $H_{\kappa^+} \prec H_{\kappa^+}^\BB$ 
		for any $\kappa$ such that $\BB$ is ${<}\kappa$-closed.
	\end{itemize}
	
	Even though the class of ${<}\omega_\alpha$-closed forcing is narrow and (some notion of) 
	$\omega_\alpha$-(semi)proper forcing would be preferable, our results highlights a strong connection of 
	the theory of iterations with generic absoluteness:
	a main outcome of the present work gives that any reasonable preservation theorem for iterations along
	a forcing class $\Gamma$ (i.e., a theorem asserting that suitable iterations of forcings 
	in $\Gamma$ produce limits which are as well in $\Gamma$)
	entails the consistency of the axiom $\RA_\omega(\Gamma)$ 
	and yields a generic absoluteness result for the same class $\Gamma$.
	
	Compared to the generic absoluteness result obtained in \cite{viale:category_forcing}, 
	the present results for $\Gamma = \SSP$ are weaker, since they regard the structure 
	$H_{\omega_2}$ instead of the larger structure $L(\ON^{\omega_1})$. 
	On the other hand, the consistency of $\RA_\alpha(\Gamma)$ for $\Gamma\neq\SSP$ is obtained from 
	(in most cases much) weaker large cardinal hypothesis and the results are more general 
	since they also apply to interesting natural choices of $\Gamma \neq \SP, \SSP$ such as 
	$\ALL$, axiom-$A$ and proper.
	Moreover the arguments we employ to prove the consistency of $\RA_\alpha(\Gamma)$ 
	are considerably simpler than the arguments developed in~\cite{viale:category_forcing}.

	Finally, a by-product of our results is that the theory $T = \MK+\RA_{\omega}(\ALL)$ 
	is consistent relative to the existence of a Mahlo cardinal and makes the theory of projective sets of 
	reals generically invariant with respect to any forcing which preserves $T$. 
	Notice that we can force $T$ to hold in a generic extension
	of $L_{\delta+1}$ with $\delta$ a Mahlo cardinal in $L$, and 
	projective determinacy cannot hold in this generic extension, since $0^\sharp$ does not exists in this model.	
	Thus $\MK+\RA_{\omega}(\ALL)$ is consistent with the failure of projective determinacy. 
	This shows that the request of generic absoluteness for projective sets of reals 
	(with respect to forcings preserving $\MK+\RA_{\omega}(\ALL)$) 
	is a natural weakening of Woodin's generic absoluteness 
	for projective sets of reals with respect to $\ZFC+$\emph{large cardinals} and \emph{all} forcing notions.	

	The remaining part of Section~\ref{sec:introduction} recalls some standard terminology. Section \ref{sec:backgrounds} presents determinacy axioms for class games and the concept of weak iterability, which will be necessary in later sections.
	Section~\ref{sec:absoluteness} introduces the definition of the iterated resurrection axioms together with their basic properties, and proves Theorem \ref{thm:absoluteness} stating that the axioms
	$\RA_\alpha(\Gamma)$ for $\alpha$ infinite produce a generic absoluteness result for $H_{\gamma^+}$ (with 
	$\gamma=\gamma_\Gamma$ depending on $\Gamma$). 
	Section~\ref{sec:uplifting} develops the necessary technical devices for the consistency proofs of the axioms $\RA_\alpha(\Gamma)$. These proofs are carried out in Section~\ref{sec:consistency} adapting the consistency proofs for the resurrection axioms introduced in~\cite{hamkins:resurrection_uplifting} to our new setting. Section \ref{sec:resurrection_open} outlines the main possible directions of further research on the topic.

\subsection{Notation} \label{ssec:notation}

	Following standard set-theoretic terminology,
	 $\trcl(x)$, $\rank(x)$ denote respectively the transitive closure and the rank of a given set $x$. 
	 $V_\alpha$ is the set of $x$ such that $\rank(x) < \alpha$ and $H_\kappa$ is the set of $x$ such that 
	 $\vp{\trcl(x)} < \kappa$. We use $\PPP(x)$, $[x]^\kappa$, $[x]^{{<}\kappa}$ to denote the powerset of $x$, 
	 the set of subsets of $x$ size $\kappa$, and the set of subsets of $x$ of size less than $\kappa$. 
	 Given a set $M$, $\pi_M:M\to Z_M$ denotes its Mostowski collapse map onto a transitive set $Z_M$.
	 The notation $f : A \to B$ is improperly used to denote partial functions in $A \times B$, ${}^AB$ to 
	 denote the collection of all such (partial) functions, and $f[A]$ to denote the pointwise image of $A$ through $f$. 
	 We use $s^\smallfrown t$ for sequence concatenation and $s^\smallfrown x$ where 
	 $x$ is not a sequence as a shorthand for $s^\smallfrown \ap{x}$. 
	 We use $t \vartriangleleft s$ to denote that $s = t \res (\vp{t}-1)$. 
	 $\CH$ denotes the continuum hypothesis and $\cc$ the cardinality of the continuum itself. 
	 We prefer the notation $\omega_\alpha$ instead of $\aleph_\alpha$ for cardinals.
	
	Let $\LLL^2$ be the language of set theory with two sorts of variables, one for sets and one for classes.
	We work with theories in the language $\LLL^2$ extending the G\"odel-Bernays system $\NBG$ and in most cases also the
	Morse-Kelley system $\MK$ (see \cite[Sec. II.10]{kunen:set_theory} for the axioms of $\NBG$ and \cite{antos:class_forcing} for $\MK$),
	and use lower-case letters for set variables and parameters and upper-case letters for class variables and parameters. We say that a formula of $\mathcal{L}^2$ is first-order if its quantifiers range only over sets.
	We remark that $\MK$ has a reasonable consistency strength (below that of an inaccessible cardinal) and 
	is a natural strengthening of $\ZFC$ (or of its equivalent $\mathcal{L}^2$-formulation $\NBG$), 
	since it asserts second-order properties that are true for natural models of $\ZFC$ 
	(models of the kind $V_\delta$ with $\delta$ inaccessible).
	
	In general we 
	identify an $\LLL^2$-model $N=\ap{\set(N),\class(N)}$ of $\NBG$ 
	with its underlying collection of classes $\class(N)$: 
	e.g., if $\kappa$ is inaccessible and $N=\ap{V_\kappa,V_{\kappa+1}}$, we just denote $N$ by $V_{\kappa+1}$. 
	We recall that from $\class(N)$ we can reconstruct whether $x \in \set(N)$ 
	via the formula $\exists y \in \class(N) ~ x \in y$.

	We use $M \prec_n N$ to denote that $(M,\in)$ is a $\Sigma_n$-elementary substructure of $(N,\in)$. 	
	Given an elementary embedding $j: V \to M$ with $M$ transitive, $\crit(j)$ denotes the critical point of $j$. 
	We say that $j$ is $\lambda$-supercompact iff ${}^\lambda M \subseteq M$, and that a cardinal 
	$\kappa$ is \emph{supercompact} iff for every $\lambda$ there exists a $\lambda$-supercompact 
	elementary embedding with critical point $\kappa$.
	Our reference text for large cardinals is \cite{kanamori:higher_infinite}, while for forcing 
	axioms is~\cite[Ch. 3]{bekkali:set_theory}.
	
	We follow Jech's approach \cite{jech:set_theory} to forcing via Boolean-valued models. 
	The letters $\BB, \CC, \DD,\ldots$ are used for set-sized complete Boolean algebras, $\0, \1$ 
	denote their minimal and maximal element, and $\2$ denotes the $2$-element Boolean algebra. We denote the Boolean-valued model obtained from $V$ and $\BB$ as $V^\BB$. If $\ap{V,\CCC}$ is a model of $\MK$, we let:
	\[
	V^{\BB}=\bp{f:V^\BB\to\BB: f\in V}, \quad \CCC^{\BB}=\bp{f:V^\BB\to\BB: f\in \CCC}.
	\]
	$\dot{x}$ (resp. $\dot{X}$) denotes an element ($\BB$-name) of $V^\BB$ (resp. $\CCC^\BB$).
	$\check{x}$ (resp. $\check{X}$)  denotes the canonical $\BB$-name for a set $x \in V$ (resp. class $X \in \CCC$) in the Boolean-valued model $V^\BB$ (resp. $\CCC^\BB$).
	$\Qp{\phi}_\BB$ is the truth-value of the formula $\phi$. We sometimes confuse $\BB$-names with their defining properties: 
	for example, given a collection of $\BB$-names $\bp{\dot{x}_\alpha : ~ \alpha < \gamma}$, 
	we confuse $\bp{\dot{x}_\alpha : ~ \alpha < \gamma}$ with a $\BB$-name $\dot{x}$ such that for all 
	$\dot{y}\in V^{\BB}$, $\Qp{\dot{y}\in\dot{x}}_\BB = \Qp{\exists \alpha < \check{\gamma} ~~ 
	\dot{y}=\dot{x}_\alpha}_\BB$. 
	When we believe this convention may confuse the reader we shall be explicitly more careful.
	
	When convenient we also use the generic filters approach to forcing. The letters $G$, $H$ will be 
	used for generic filters over $V$, $\dot{G}_\BB$ denotes the canonical name for a generic filter for 
	$\BB$, $\val_G(\dot{x})$ the valuation map on names by the generic filter $G$, $V[G]$ the generic extension of $V$ by $G$. 
	Given $G$ $V$-generic for $\BB\in V$:
	\[
	V[G]=\bp{\val_G(\dot{x}) : ~ \dot{x}\in V^\BB}, \quad \CCC[G]=\bp{\val_G(\dot{X}) : ~ \dot{X}\in \CCC^\BB}.
	\]
	A key result we freely use throughout the paper is that $\MK$ (respectively $\NBG$) 
	is preserved by any set-sized forcing over an $\LLL^2$-model of $\MK$ (respectively $\NBG$), 
	\cite[Thm. 23]{antos:class_forcing}:
	if $\ap{V,\CCC}\models\MK$ (resp. $\ap{V,\CCC}\models\NBG$), then
	$\ap{V[G],\CCC[G]}\models\MK$ (resp. $\ap{V[G],\CCC[G]}\models\NBG$) for all set-sized forcings $\BB\in V$
	and all $V$-generic filters $G$ for $\BB$. 
	Let $\phi$ be a formula in $\LLL^2$. We write $V^\BB \models \phi$ to denote that $\phi$ holds in all generic extensions 
	$\ap{V[G],\CCC[G]}$ with $G$ $V$-generic for $\BB$.
		
	We use $\Coll(\kappa,{<}\lambda)$ for the L\'evy collapse that generically adds a surjective function from $\kappa$ to any $\gamma < \lambda$, $\Add(\kappa,\lambda)$ for the ${<}\kappa$-closed poset that generically adds $\lambda$ many subsets to $\kappa$. We prefer the notation ``$X$ has the ${<}\kappa$-\emph{property}'' for all properties that are defined in terms of $\forall \gamma < \kappa ~ \phi(\gamma, X)$ for some formula $\phi$. In all such cases we explicitly avoid the notation ``$\kappa$-\emph{property}'' and use ${<}\kappa^+$-\emph{property} instead. In general we feel free to confuse a partial order $P$ with its Boolean completion $\RO(P)$ and a Boolean algebra $\BB$ with the partial order $\BB^+$ given by its positive elements. Once again, when we believe that this convention may generate misunderstandings we shall be explicitly more careful.
	
	Given a cardinal $\kappa$ definable by some formula $\phi(x)$ in one free variable, we let $\dot{\kappa}$ be a $\BB$-name such that $\Qp{\phi(\dot{\kappa})}_{\BB}=\1$ and improperly write $H_{\kappa}^\BB$ for a $\BB$-name whose interpretation in $V[G]$ for any $V$-generic filter $G$ is the structure $H_{\val_G(\dot{\kappa})}^{V[G]}$ (e.g., if $\kappa$ is $\omega_1$ and $\BB$ is the Boolean completion of $\Coll(\omega,\omega_1)$, we write $H_{\omega_1}^\BB$ for $H_{\check{\omega}_2}^\BB$).
	We use $H_{\kappa}^\BB \prec H_{\kappa}^\CC$ (respectively $H_{\kappa} \prec H_{\kappa}^\BB$) to denote that $\BB$ is a complete subforcing of $\CC$ and for all $G$ $V$-generic for $\BB$ and $H$ $V[G]$-generic for $\CC/_G$, $H_{\kappa}^{V[G]} \prec H_{\kappa}^{V[G][H]}$.

	\section{Backgrounds} \label{sec:backgrounds}


\subsection{Second-order elementarity and class games} \label{sec:class_games}

In order to be able to properly formulate the iterated resurrection axioms and prove their consistency, we need to 
carefully examine the syntactic complexity of several basic notions regarding proper classes. Most of our results
will be formulated as second order statements on the universe of sets. Hence we need to be able to 
handle different logically equivalent formulations of a variety of second order properties; properties which we also 
need to show to be of low logical complexity with respect to second order logic. 

	Let $T$ be a theory in the language $\LLL^2$ extending $\NBG$ that provably holds in $V_{\delta+1}$ with $\delta$ inaccessible. Let $\Delta^1_1(T)$ denote the formulae in $\LLL^2$ with set parameters that are provably equivalent modulo $T$ both to a $\Sigma^1_1$ formula (i. e. a formula with one existential class quantifier and set parameters) and to a $\Pi^1_1$ formula (with one universal class quantifier and set parameters).
	
	Let $N$, $M$ be models of $T$. We say that $N \prec_{\Delta^1_1(T)} M$ iff $\set(N) \subseteq \set(M)$ and all 
	$\Delta^1_1(T)$-formulae with set parameters in $N$ hold in $N$ if and only if they also hold in $M$. 
	Similarly, we say that $N \equiv_{\Delta^1_1(T)} M$ iff $N \prec_{\Delta^1_1(T)} M \prec_{\Delta^1_1(T)} N$. 
	Note that $N \prec_{\Delta^1_1(T)} M$ does not require $\class(N) \subseteq \class(M)$: we also use this 
	notion when $N, M \prec V_{\kappa+1}$ for some inaccessible cardinal $\kappa$ and 
	$N \cap V_\kappa = V_\delta = M \cap V_\kappa$ for some inaccessible cardinal $\delta<\kappa$ (e.g., Lemma~\ref{lem:statdelta21}). 
	In this situation $N, M \not\subseteq V_{\delta+1}$, but we are still able to say that $N, M$ are 
	$\Delta^1_1(T)$-elementary in $V_{\delta+1}$ or that $N \equiv_{\Delta^1_1(T)} M$ 
	(since only set parameters are considered).

	The $\Delta^1_1(T)$-formulae are interesting for their absolute behavior with respect to models of $T$ with the same sets.

	\begin{proposition} \label{prop:delta21abs}
	Let $T$ be a theory extending $\NBG$, $N, M$ be models of $T$ with $\set(N) = \set(M)$ and $N \subseteq M$. Then $N \equiv_{\Delta^1_1(T)} M$.
	\end{proposition}
	\begin{proof}
		Let $X = \set(N) = \set(M)$, and $\phi \equiv \exists C ~ \psi_1(C, \vec{p}) \equiv \forall C ~ \psi_2(C, \vec{p})$ be a $\Delta^1_1(T)$ formula, with $\vec{p} \in X$ and $\psi_i$ first-order for $i=1,2$. Then,
		\[
		\begin{array}{rl}
		N \models \phi &\Rightarrow \exists C \in N ~ \ap{X, C} \models \psi_1(C, \vec{p}) \\
		&\Rightarrow \exists C \in M ~ \ap{X, C} \models \psi_1(C, \vec{p}) \Rightarrow \\
		M \models \phi &\Rightarrow \forall C \in M ~ \ap{X, C} \models \psi_2(C, \vec{p}) \\
		&\Rightarrow \forall C \in N ~ \ap{X, C} \models \psi_2(C, \vec{p}) \Rightarrow N \models \phi 	\qedhere
		\end{array}
		\]
	\end{proof}

	\begin{corollary} \label{cor:delta21abs}
		Let $T$ be a theory extending $\NBG$ that holds in $V_{\delta+1}$ with $\delta$ inaccessible. Let $N, M$ be models of $T$ with $\set{(N)} = \set{(M)} = V_\delta$. Then $N \equiv_{\Delta^1_1(T)} M$.
	\end{corollary}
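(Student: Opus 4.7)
The plan is to reduce the corollary directly to Proposition~\ref{prop:delta21abs} by using $V_{\delta+1}$ as a common ``ambient'' class model into which both $N$ and $M$ embed. Since $\delta$ is inaccessible, $\PPP(V_\delta) = V_{\delta+1}$, so any model $N'$ of $T$ whose set-part is $V_\delta$ must have $\class(N') \subseteq V_{\delta+1}$; in particular $N \subseteq V_{\delta+1}$ and $M \subseteq V_{\delta+1}$ as $\LLL^2$-structures with the same set-part $V_\delta$. By the hypothesis on $T$, the full structure $\ap{V_\delta, V_{\delta+1}}$ is itself a model of $T$.

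Given this, I would apply Proposition~\ref{prop:delta21abs} twice: once to the inclusion $N \subseteq V_{\delta+1}$ and once to $M \subseteq V_{\delta+1}$, yielding
\[
N \equiv_{\Delta^2_1(T)} V_{\delta+1} \quad \text{and} \quad M \equiv_{\Delta^2_1(T)} V_{\delta+1}.
\]
Since $\equiv_{\Delta^2_1(T)}$ is trivially an equivalence relation on models sharing the same set-part (it is just mutual implication of $\Delta^2_1(T)$-sentences with parameters from the common set-part $V_\delta$), composing these two gives $N \equiv_{\Delta^2_1(T)} M$, which is the conclusion.

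There is essentially no obstacle: the only point to verify carefully is that set parameters from $N$ or $M$ are automatically available in $V_{\delta+1}$, which is immediate because they lie in the shared set-part $V_\delta$. The substantive content is really packed into Proposition~\ref{prop:delta21abs}; the corollary is a one-line composition via the maximal class model $V_{\delta+1}$.
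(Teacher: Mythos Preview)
Your argument is correct and is essentially identical to the paper's own proof: the paper also observes that $V_{\delta+1}$ is a model of $T$ containing both $N$ and $M$, and then applies Proposition~\ref{prop:delta21abs} twice to get $N \equiv_{\Delta^2_1(T)} V_{\delta+1} \equiv_{\Delta^2_1(T)} M$. Your write-up simply spells out in a bit more detail why $N,M\subseteq V_{\delta+1}$ and why the transitivity step is legitimate.
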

	\begin{proof}
		$V_{\delta+1}$ is a model of $T$ containing both of the transitive collapses $Z_N$, $Z_M$ of $M$ and $N$,
		 hence by Lemma \ref{prop:delta21abs}, 
		 $N \cong Z_N \equiv_{\Delta^1_1(T)} V_{\delta+1} \equiv_{\Delta^1_1(T)} Z_M \cong M$.
	\end{proof}

	Proposition \ref{prop:delta21abs} and its corollary tells us that for any inaccessible $\delta$ the truth-value of a $\Delta^1_1(T)$-formula does not depend on the choice of the particular $T$-model whose family of sets is $V_\delta$. Thus we mostly focus on $\NBG$-models of the kind $V_{\delta+1}$.
	
	We are also interested in a more restrictive class of formulae, which we call \emph{canonical $\Delta^1_1(T)$-formulae}.
	
	\begin{definition} \label{def:canonicalformula}
		Let $\phi(x_1,\ldots,x_n)$ be an $\LLL^2$-formula with set variables $x_1,\ldots,x_n$. 
		$\phi$ is a \emph{canonical $\Delta^1_1(T)$-formula} iff there are first-order formulae $\psi_1(x_1,\ldots,x_n,Y)$, $\psi_2(x_1,\ldots,x_n,Y)$ with a class parameter $Y$ such that:
		\[
		\begin{split}
			T &\models \forall x_1,\dots,x_n ~ \exists! Y ~ \psi_2(x_1,\ldots,x_n,Y) \\
			\phi(x_1,\ldots,x_n)
			&\equiv_T \exists Y ~ \cp{\psi_2(x_1,\dots,x_n,Y) \wedge \psi_1(x_1,\dots,x_n,Y)}  \\
			&\equiv_T \forall Y ~ \cp{\psi_2(x_1,\dots,x_n,Y) \rightarrow \psi_1(x_1,\dots,x_n,Y)}.
		\end{split}
		\]
	\end{definition}
	
	In other words, an $\LLL^2$-formula is a canonical $\Delta^1_1(T)$-formula if it can be expressed as a first-order property $\psi_1(x_1,\dots,x_n,C)$ of a class parameter $C$, which is uniquely defined by a first-order formula $\psi_2(x_1,\dots,x_n,Y)$ with a class variable $Y$ (in theory $T$).
	By its very definition a canonical $\Delta^1_1(T)$-property $\phi$ is indeed a $\Delta^1_1(T)$-property. On the other hand canonical $\Delta^1_1(T)$-formulae are closed under connectives and set-quantifiers, hence they are best suited to handle syntactic manipulations of formulae.
	
	\begin{proposition} \label{prop:canonicalformula}
		Let $\phi^i(x_0,\ldots,x_n)$ be canonical $\Delta^1_1(T)$-formulae for $i=1,2$. 
		Then also $\neg \phi^i$, $\phi^1 \wedge \phi^2$ and $\exists x_0 ~ \phi^i$ are 
		canonical $\Delta^1_1(T)$-formulae.
	\end{proposition}
	\begin{proof}
		Assume that $\phi^i$ is defined through $\psi^i_1(x_0,\dots,x_n,Y)$ and 
		$\psi^i_2(x_0,\dots,x_n,Y)$ as in the above definition. 
		Given a class $Y$, let $Y[x]=\bp{y: \ap{ x,y }\in Y}$.\footnote{Here to simplify matters we abuse of the reader mixing syntactic notions with
				their model theoretic interpretations. The key fact is that the map $\ap{x,Y}\mapsto Y[x]$ is 
				$\Sigma_0$-definable in $\NBG$.}
		The following holds:
		\begin{itemize}
			\item $\neg \phi^i(x_0,\dots,x_n)$ is defined through the same ``unique class'' $\psi'_2 \equiv \psi^i_2$ and opposite property $\psi'_1 (x_0,\dots,x_n,Y)
			\equiv \neg \psi^i_1(x_0,\dots,x_n,Y)$;
			\item $\phi^1(x_0,\dots,x_n) \wedge \phi^2(x_0,\dots,x_n)$ is defined through:
			\[
			\begin{split}
				\qquad \psi'_2(x_0,\dots,x_n,Y) &\equiv \psi^1_2(x_0,\dots,x_n,Y[0]) \wedge \psi^2_2(x_0,\dots,x_n,Y[1]) \wedge \dom(Y) = 2 \\
				\qquad \psi'_1(x_0,\dots,x_n,Y) &\equiv \psi^1_1(x_0,\dots,x_n,Y[0]) \wedge \psi^2_1(x_0,\dots,x_n,Y[1])
			\end{split}
			\] 
			so that the required unique class $Y=\bp{\ap{0,x}:x\in Y_0}\cup\bp{\ap{1,y}:y\in Y_1}$ 
			is a ``gluing'' of the two classes $Y_0,Y_1$ which work for $\phi^1$, $\phi^2$;
			\item $\exists x ~ \phi^i(x,x_1,\dots,x_n)$ is defined through:
			\[
			\begin{split}
				\psi'_2(x_0,\dots,x_n,Y) &\equiv \forall x ~ \psi^i_2(x,x_1,\ldots,x_n,Y[x]) \\
				\psi'_1(x_0,\dots,x_n,Y) &\equiv \forall x ~ \psi^i_1(x,x_1,\ldots,x_n,Y[x])
			\end{split}
			\] 
			so that the required unique class $Y=\bp{\ap{x,y}:y\in Y_x}$ 
			glues together all the classes $Y_x$ satisfying 
			$\psi^i_2(x,x_1,\dots,x_n,Y_x)$ for every possible value of $x$. \qedhere
		\end{itemize}
	\end{proof}

	In the remainder of this paper, we will need to prove that certain statements about class games are $\Delta^1_1(T)$ for a suitable theory $T$: what we will actually prove is that they are \emph{canonically} $\Delta^1_1(T)$. 
	In order to justify the choice of a theory $T$ we need to introduce clopen games on class trees. 
	Our reference text for the basic notions and properties of games is \cite[Sec. 20.A]{kechris:descriptive}.

	We consider well-founded trees $U$ as collections of finite sequences ordered by inclusion and 
	closed under initial segments, such that there exists no infinite chain (totally ordered subset) in $U$. 
	Recall that $t \vartriangleleft s$ for $s, t \in U$ denotes that 
	$s = t \res (\vp{t}-1)$ ($t$ is obtained extending $s$ with one more element).

	The clopen game on the well-founded tree $U$ is a two-player game $\GGG^U$ defined as follows: Player I starts with some $s_0 \in U$ of length $1$, then each player has to play an $s_{n+1} \vartriangleleft s_n$. The last player who can move wins the game. A winning strategy $\sigma$ for Player I in $\GGG^U$ is a subtree $\sigma \subseteq U$ such that for all $s\in \sigma$ of even length $|s|$ there is exactly one $t\in\sigma$ with $t \vartriangleleft s$, and for every $s \in \sigma$ of odd length, every $t \vartriangleleft s$ is in $\sigma$. A winning strategy for Player II is defined interchanging odd with even in the above statement. A game $\GGG^U$ is determined if one of the two players has a winning strategy.

	We recall that there is a correspondence between games $\GGG^U$ on a well-founded tree $U$ and games on a pruned tree (as defined in \cite{kechris:descriptive}) whose winning condition is a clopen set. This justifies our terminology.

	In the following we will be interested in theories $T \supseteq \NBG + \ADF$, where $\ADF$ is 
	the following axiom of determinacy for clopen class games:

	\begin{definition}[$\ADF$]
		$\GGG^U$ is determined for any well-founded class tree $U$.
	\end{definition}
	
	Games $\GGG^U$ on well-founded set trees $U\subseteq V_\delta$ are determined in $\ZFC$ (see \cite[Thm. 20.1]{kechris:descriptive}) and the corresponding strategies $\sigma \subseteq U$ are elements of $V_{\delta+1}$. Thus the theory $\NBG + \ADF$ holds in any $V_{\delta+1}$ with $\delta$ inaccessible, and we can apply the results of this section to this theory. A finer upper bound for $\NBG + \ADF$ is given by the next proposition (we thank Alessandro Andretta for pointing this fact to us).

	\begin{proposition} \label{prop:mkadf}
		The Morse-Kelley theory $\MK$ (with the axiom of global choice) implies $\ADF$.
	\end{proposition}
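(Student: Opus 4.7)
The plan is to lift Zermelo's classical proof of clopen determinacy to class-sized trees, using the two features that distinguish $\MK$ from $\NBG$: impredicative class comprehension (with the attendant principle of transfinite recursion along well-founded class relations) and the axiom of global choice.

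First, define a class $W \subseteq T$ capturing the predicate ``the player to move from $s$ has a winning strategy'' via the recursive clause
\[
s \in W \iff \exists t \, (t \vartriangleleft_T s \wedge t \notin W).
\]
The relation $\vartriangleleft_T$ on $T$ is well-founded, since any $\vartriangleleft_T$-descending chain would give an infinite chain of $T$. In $\MK$, well-founded recursion holds for arbitrary well-founded class relations, even those that are not set-like---this is essential here, since a node of $T$ may have class-many immediate $\vartriangleleft_T$-predecessors. Alternatively, one can define $W$ impredicatively as the smallest class $X \subseteq T$ closed under the appropriate condition, which is a $\Pi^2_1$-definition and hence an $\MK$-class by impredicative comprehension.

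From $W$, identify the winner: Player I has a winning strategy iff there exists some $s_0 \in T$ of length $1$ with $s_0 \notin W$, and otherwise Player II does. (Note that under the parity convention of the definition of winning strategy, even-length positions of $T$ are exactly those where Player I is about to move.) In the Player I case, apply global choice to pick, simultaneously for each $s \in W$, a witness $f(s) \vartriangleleft_T s$ with $f(s) \notin W$. Build the strategy $\sigma \subseteq T$ as the smallest subtree satisfying: a fixed length-$1$ witness $f_0 \notin W$ belongs to $\sigma$; for every $s \in \sigma$ of odd length, all $t \vartriangleleft_T s$ are in $\sigma$; for every $s \in \sigma$ of even length $\geq 2$, the chosen $f(s)$ is in $\sigma$. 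A routine induction along $\vartriangleleft_T$ shows that all even-length nodes of $\sigma$ lie in $W$ and all odd-length nodes of $\sigma$ lie outside $W$, so that the only terminal nodes of $\sigma$ are odd-length leaves of $T$---positions from which Player II cannot move---and therefore Player I wins every play consistent with $\sigma$. The Player II case is entirely symmetric.

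The main obstacle is the first step: establishing $W$ as a genuine class in $\MK$. The difficulty comes from the possible non-set-likeness of $\vartriangleleft_T$, which blocks the $\NBG$-style approach of constructing $W$ via an ordinal-valued rank function built stage by stage along sets. This is precisely where the impredicative comprehension schema of $\MK$ (and not merely $\NBG$) is needed. Once $W$ is in hand, the remaining construction is routine bookkeeping together with a single appeal to global choice.
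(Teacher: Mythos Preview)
Your proposal is correct and takes essentially the same approach as the paper. Both arguments lift Zermelo's classical proof: first define a ``winner'' labeling by recursion along the well-founded relation $\vartriangleleft_T$ (your class $W$ is exactly $\{s : w(s) = |s| \bmod 2\}$ in the paper's notation), relying on the fact that $\MK$ supports recursion on well-founded class relations that need not be set-like; then use global choice to extract an explicit strategy. The paper simply cites a reference for the recursion principle, while you spell out more carefully why the non-set-likeness of $\vartriangleleft_T$ is the genuine obstacle and why impredicative comprehension in $\MK$ resolves it; otherwise the two proofs match.
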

	\begin{proof}
		Since the recursion theorem on well-founded class trees holds in $\MK$ (see \cite[Prop. 2]{antos:class_forcing}), we can follow the classical $\ZFC$ proof of determinacy for clopen games $\GGG^U$ on well-founded set trees $U$.

		For any $s \in U$ the next moving player is I if $\vp{s} \equiv 0 \pmod{2}$ and II otherwise. Define recursively a (class) map $w : U \to 2$ assigning  (coherently) to every position $s$ in $\GGG^U$ a ``supposedly winning'' player $w(s)$ (I if $\vp{s} \equiv 0 \pmod{2}$ and II otherwise). Precisely, we let $w(s) \equiv \vp{s}  \pmod{2}$ (that is, the moving player is winning) iff there exists a $t \vartriangleleft s$ such that $w(t) \equiv \vp{s}  \pmod{2}$ (that is, the moving player can win by playing $t$).

		Then we can use the map $w$ and the axiom of global choice to define a winning strategy $\sigma$ for Player I if $w(\emptyset) = 0$, and for Player II if $w(\emptyset) = 1$. Precisely, define $U_w = \bp{s \in U: ~ w(s) = w(\emptyset)}$ and $s^+ = \min_{\lessdot}\bp{t \in U_w: ~ t \vartriangleleft s}$ where $\lessdot$ is a well-order on $V$. Then,
		\[
		\sigma = \bp{s \in U_w: ~ \forall m < \vp{s} ~~ \cp{m \equiv w(\emptyset) \pmod{2}} \rightarrow \cp{s \res (m+1) = (s \res m)^+}}
		\]
		is the desired strategy.
	\end{proof}
	
	Theories $\NBG+\ADF$ and $\MK$ are well-suited for proving statements to be $\Delta^1_1(T)$, as shown in the following.
	
	\begin{proposition} \label{prop:winningdelta}
		Let $U$ be a first-order definable well-founded class tree, and let $\phi$ be asserting that ``player $p$ wins the game $\GGG^U$''. Then $\phi$ is $\Delta^1_1(\NBG + \ADF)$ and canonical $\Delta^1_1(\MK)$.
	\end{proposition}
	\begin{proof}
		By $\ADF$, $\phi$ can be expressed both as \emph{``there is a strategy $\sigma$ winning for $p$''} and as \emph{``all strategies $\sigma$ are not winning for $1-p$''}. Since being a winning strategy for a first-order definable $U$ is first-order expressible, the first part follows.
		
		For the second part, $\phi$ is canonically $\Delta^1_1(\MK)$ as witnessed 
		by the unique class $w : U \to 2$ defined in Proposition \ref{prop:mkadf}, which exists assuming $\MK$.
	\end{proof}
	
	In the remainder of this paper we will be working in $T = \MK$ and use $\Delta^1_1$ as a shorthand for $\Delta^1_1(\MK)$, while pointing out some passages where $\ADF$ is essentially used. 
	It is possible that a theory $T$ weaker than $\MK$, such as $\NBG+\ADF$ or $\NBG +\textit{there exists a truth predicate}$, 
	would suffice to carry out all the arguments at hand. However, it is not clear to us at the moment 
	whether such theories $T$ are preserved by set-sized forcings (as it is the case for $\MK$),
	and whether they allow to formulate each 
	$\Delta^1_1(\MK)$-property we will introduce in
	the remainder of this paper as a $\Delta^1_1(T)$-property.

	\subsection{Algebraic forcing iterations}

	We present iterations following \cite{a:viale:semiproper_iterations}, which expands on the work of Donder and 
	Fuchs on revised countable support iterations~\cite{fuchs:donder_rcs}. We will need the material in this 
	section to prove the consistency of the iterated resurrection axioms.
	
	Let $(\PP,\leq_\PP),(\QQ,\leq_\QQ)$ be partial orders. We recall that $\PP$ is a suborder of $\QQ$ if $\PP\subseteq \QQ$ and the inclusion map preserves the order and the incompatibility relation. $\PP$ is a complete suborder of $\QQ$ if any maximal antichain in $(\PP,\leq_\PP)$ remains such in $(\QQ,\leq_\QQ)$.
	We feel free to confuse a partial order $\PP$ with its Boolean completion $\RO(\PP)$ and a Boolean algebra $\BB$ with the partial order $\BB^+$ given by its positive elements.
	
	\begin{definition}
		Let $\BB$, $\CC$ be complete Boolean algebras, $i: \BB \to \CC$ is a \emph{complete homomorphism} iff it is an homomorphism that preserves arbitrary suprema. We say that $i$ is a \emph{regular embedding} iff it is an injective complete homomorphism of Boolean algebras.
	\end{definition}

	Complete homomorphisms on complete Boolean algebras induce natural \linebreak $\Delta_1$-elementary maps between the corresponding Boolean-valued models.
	
	\begin{proposition}[{\cite[Prop. 2.11]{a:viale:semiproper_iterations}}] \label{prop:ihat}
		Let $i:\BB\to\CC$ be a complete homomorphism, and define by recursion $\hat{\imath}: V^\BB \to V^\CC$ as
		\[
		\hat{\imath}(\dot{x}) = \bp{ \ap{\hat{\imath}(\dot{y}),i(\dot{x}(\dot{y}))}: ~ \dot{y} \in \dom(\dot{x}) }
		\]
		Then the map $\hat{\imath}$ is $\Delta_1$-elementary, i.e., for every $\Delta_1$-property\footnote{More
		 precisely, provably $\Delta_1$ in a theory $T$ which holds in $V^\BB$ and $V^\CC$.} $\phi$,
		\[
		i\cp{ \Qp{ \phi(\dot{x}_1,\ldots,\dot{x}_n) }_\BB } = \Qp{ \phi( \hat{\imath}(\dot{x}_1),\ldots,\hat{\imath}(\dot{x}_n) ) }_\CC
		\]
	\end{proposition}
	
	We are now ready to give an algebraic definition of forcing iteration.
	
	\begin{definition}
		Let $i: \BB \to \CC$ be a regular embedding, the \emph{retraction} associated to $i$ is the map
		\[
		\begin{array}{llll}
			\pi_i :& \CC &\to& \BB \\
			& p &\mapsto& \bigwedge \bp{q \in \BB: ~ i(q) \geq p}.
		\end{array}
		\]
	\end{definition}
	
	\begin{definition}
		$\FFF=\{i_{\alpha \beta}:\BB_\alpha\to \BB_\beta:\alpha \leq \beta < \lambda\}$ is a \emph{complete iteration system} of complete Boolean algebras iff for all $\alpha \leq \beta \leq \xi < \lambda$:
		\begin{enumerate}
			\item $\BB_\alpha$ is a complete Boolean algebra and $i_{\alpha \alpha}$ is the identity on it;
			\item $i_{\alpha \beta}$ is a regular embedding with associated retraction $\pi_{\alpha \beta}$;
			\item $i_{\beta \xi}\circ i_{\alpha \beta}=i_{\alpha \xi}$.
	 \end{enumerate}
		If $\xi < \lambda$, we define $\FFF \res \xi = \bp{i_{\alpha \beta}: \alpha \leq \beta < \xi}$.
	\end{definition}
	
	\begin{definition}
	Let $\FFF$ be a complete iteration system of length $\lambda$. Then:
	
	\begin{itemize}
	\item
	 The \emph{inverse limit} of the iteration is
		\[
		\invlim \FFF = \bp{ s \in \prod_{\alpha < \lambda} \BB_\alpha : ~ \forall \alpha \forall \beta > \alpha ~ \pi_{\alpha \beta}(s(\beta)) = s(\alpha) }
		\]
		and its elements are called \emph{threads}. 
	\item
		The \emph{direct limit} is
		\[
		\dirlim \FFF = \bp{ s \in \invlim \FFF : ~ \exists \alpha \forall \beta > \alpha ~ s(\beta) = i_{\alpha\beta}(s(\alpha)) }
		\]
		and its elements are called \emph{constant threads}.
	\item
		The \emph{revised countable support limit} is
		\[
		\rcslim \FFF = \bp{ s \in \invlim \FFF : ~ s \in \dirlim \FFF \vee \exists \alpha ~ s(\alpha) \Vdash_{\BB_\alpha} \cof(\check{\lambda}) = \check{\omega} }.
		\]
	\end{itemize}	
	\end{definition}
	
	\begin{definition}
		Let $\FFF = \bp{i_{\alpha \beta} : ~ \alpha \leq \beta < \lambda}$ be an iteration system. We say that $\FFF$ is a \emph{${<}\kappa$-support iteration} iff $\BB_\alpha = \RO(\dirlim \FFF \res \alpha)$ whenever $\cof(\alpha) \geq \kappa$, and $\BB_\alpha = \RO(\invlim \FFF \res \alpha)$ otherwise. We say that $\FFF$ is a \emph{revised countable support} iteration iff $\BB_\alpha = \RO(\rcslim \FFF \res \alpha)$ for all $\alpha < \lambda$.
	\end{definition}
	
	We recall that the direct limit of an iteration system inherits the structure of 
	a Boolean algebra through pointwise operations.
	
	\begin{theorem}[Baumgartner, {\cite[Prop. 3.13]{a:viale:semiproper_iterations}}] \label{iBaumgartner}
		Let $\FFF$ be an iteration system such that $\BB_\alpha$ is ${<}\lambda$-cc for all $\alpha$ and $S = \bp{\alpha: ~ \BB_\alpha \cong \RO(\dirlim \FFF \res \alpha)}$ is stationary. Then $\dirlim \FFF$ is ${<}\lambda$-cc and $\invlim \FFF = \dirlim \FFF$ is a complete Boolean algebra.
	\end{theorem}
	
	We will need the following formulation of two-step iterations and generic quotients.
	
	\begin{definition}[two-step iteration] \label{def:two_step_iter}
		Let $\BB$ be a complete Boolean algebra, and $\dot{\CC}$ be a $\BB$-name for a complete Boolean algebra. We denote by $\BB \ast \dot{\CC}$ the Boolean algebra defined in $V$ whose elements are the equivalence classes of $\BB$-names for elements of $\dot{\CC}$ (i.e., $\dot{p}\in V^{\BB}$ such that $\Qp{\dot{p}\in\dot{\CC}}_{\BB}=\1$) modulo the equivalence relation:
		\[
		\dot{p} \approx \dot{q} ~ \Leftrightarrow ~ \Qp{\dot{p} = \dot{q}}_\BB = \1,
		\]
		with the following operations:
		\begin{align*}
			[\dot{p}] \vee_{\BB \ast \dot{\CC}} [\dot{q}] = [\dot{r}] &\iff \Qp{\dot{r} = \dot{p} \vee_{\dot{\CC}} \dot{q}}_\BB =\1; \\
			\qquad \neg_{\BB \ast \dot{\CC}}[\dot{p}]=[\dot{r}] &\iff \Qp{\dot{r}=\neg_{\dot{\CC}}\dot{p}}_\BB = \1.
		\end{align*}
	\end{definition}
	
	\begin{proposition}[{\cite[Prop. 4.6]{a:viale:semiproper_iterations}}]\label{qQuotientCba}
		Let $i: \BB \to \CC$ be a regular embedding of complete Boolean algebras and $G$ be a $V$-generic filter for $\BB$. Then $\CC /_G$, defined (with abuse of notation) as the quotient of $\CC$ by the ideal which is dual to the filter generated by $i[G]$, is a complete Boolean algebra in $V[G]$.
	\end{proposition}
	
	\begin{definition}
		Let $\Gamma$ be any definable class of complete Boolean algebras closed under two step iterations and let $\BB, \CC$ be complete Boolean algebras.

		We say that $\BB \leq_\Gamma \CC$ iff there is a complete homomorphism $i: \CC \to \BB$ such that the quotient algebra $\BB /_{\dot{G}_\CC}$ is in $\Gamma$ with Boolean value $\1_{\CC}$. Notice that we do not require either $\BB$ or $\CC$ to be in $\Gamma$.

		We say that $\BB \leq^\ast_\Gamma \CC$ iff there is a complete \emph{injective} homomorphism with the same properties as above.
	\end{definition}

	\subsection{Weakly iterable forcing classes}

	We are now ready to introduce the definition of \emph{weakly iterable} class of forcing notions. Given a definable class $\Gamma$ of forcing notions, let $\Gamma^{\lim}$ denote the (definable) class of complete iteration systems $\FFF = \bp{i_{\alpha \beta}:\BB_\alpha\to \BB_\beta: ~ \alpha \leq \beta < \lambda}$ such that $i_{\alpha \beta}$ witnesses that $\BB_\beta \leq_\Gamma \BB_\alpha$ for all $\alpha \leq \beta < \lambda$.
	
	\begin{definition}
		Let $T$ be a theory extending $\NBG$, $\Gamma$ be a definable class of complete Boolean algebras in $T$, 
		$\Sigma : \Gamma^{\lim} \to \Gamma^{\lim}$ be a definable class function in $T$, 
		$\gamma$ be a definable cardinal in $T$.
		We say that an iteration system $\FFF=\{\BB_\eta:\eta<\alpha\} \in \Gamma^{\lim}$ of length $\alpha$ 
		\emph{follows} $\Sigma$ if and only if for all $\beta < \alpha$ 
		even\footnote{We remark that every limit ordinal is even.}, 
		$\FFF \res (\beta+1) = \Sigma(\FFF \res \beta)$.
	
		We say that $\Sigma$ is a \emph{weak iteration strategy} for $\Gamma$ if and only if we can prove in 
		$T$ that for every $\FFF=\{\BB_\eta:\eta<\alpha\}$ of length $\alpha$ which follows $\Sigma$, 
		$\Sigma(\FFF)$ has length $\alpha+1$ and $\FFF = \Sigma(\FFF) \res \alpha$.
	
		 We say that $\Sigma$ is a \emph{$\gamma$-weak iteration strategy} for $\Gamma$ if in addition 
		 $\Sigma(\FFF) = \dirlim \FFF$ whenever $\FFF=\{\BB_\eta:\eta<\alpha\}$ and $\cof(\alpha) = \gamma$ 
		 or $\cof(\alpha) =\alpha> \gamma,\vp{\BB}$ for all $\BB$ in $\FFF$.
	\end{definition}
	
	\begin{definition}
		Let $\BBB$ be a collection of complete Boolean algebras. We denote as $\prod \BBB$ the \emph{lottery sum} of the algebras in $\BBB$, defined as the Boolean algebra obtained by the cartesian product of the respective Boolean algebras with pointwise operations.
	\end{definition}
	
	The name \emph{lottery sum} is justified by the intuition that forcing with $\prod \BBB$ corresponds with forcing with a ``random'' algebra in $\BBB$. In fact, since the set of $p \in \prod \BBB$ that are $\1$ in one component and $\0$ in all the others form a maximal antichain, every $V$-generic filter $G$ for $\prod \BBB$ concentrates only on a specific $\BB \in \BBB$ (determined by the generic filter).
	
	\begin{definition} \label{def:weak_iterable}
		Let $T$ be a theory extending $\NBG$ by a recursive set of axioms, $\Gamma$ be a definable class of complete Boolean algebras, $\Sigma : \Gamma^{\lim} \to \Gamma^{\lim}$ be a definable class function, $\gamma$ be a definable cardinal.
	
		We say that $\Gamma$ is \emph{$\gamma$-weakly iterable} through $\Sigma$ in $T$ iff we can prove in $T$ that:
		\begin{itemize}
			\item $\Gamma$ is closed under two-step iterations and set-sized lottery sums;
			\item $\Sigma$ is a $\gamma$-weak iteration strategy for $\Gamma$;
			\item $\ap{\Gamma, \Sigma}$ as computed in $V_{\kappa+1}$ is equal to $\ap{\Gamma \cap V_\kappa, \Sigma \cap V_\kappa}$ whenever $\kappa$ is inaccessible and $V_{\kappa+1} \models T$.%
			\footnote{Since $\kappa$ is inaccessible, this statement is equivalent to $V_{\kappa+1} \models T \setminus \NBG$ which is recursive.}
		\end{itemize}
		We say that $\Gamma$ is weakly iterable iff it is $\gamma$-weakly iterable through 
		$\Sigma$ for some $\gamma,\Sigma$.
	\end{definition}
	
	Intuitively, $\Gamma$ is weakly iterable iff there is a sufficiently nice strategy for choosing limits in $\Gamma$
 	for iterations of indefinite length in $\Gamma^{\lim}$.
	We remark that the latter definition (for a $T \supseteq \NBG$) is not related to a specific model $V$ of $T$, and requires that the above properties are provable in $T$, and hence hold for \emph{every} $T$-model $M$: for example, if $T = \MK$ they must hold in every $V_{\kappa+1}$ where $\kappa$ is inaccessible. We feel free to omit the reference to $T$ when clear from the context, and in particular when $T = \MK$.
	
	Many notable classes $\Gamma$ are $\omega_i$-weakly iterable for some $i=0,1$:
	\begin{itemize}
		\item $\Omega$ is $\omega$-weakly iterable using a strategy $\Sigma$ which takes 
		finite support limits at limit stages and is the identity elsewhere.
		
		\item axiom-$A$, proper are $\omega_1$-weakly iterable using a strategy $\Sigma$
		which takes countable support limits at limit stages and is the identity elsewhere \cite{baumgartner:iterated_forcing}.
		
		\item locally ${<}\kappa^+$-cc\footnote{$\BB$ is locally ${<}\kappa$-cc if it is the lottery sum of ${<}\kappa$-cc complete Boolean algebras.}
		and ${<}\kappa$-closed are $\kappa$-weakly iterable using a strategy $\Sigma$ which takes ${<}\kappa$-sized support limits at limit stages and is the identity elsewhere \cite{baumgartner:iterated_forcing}.
		
		\item $\SP$ is $\omega_1$-weakly iterable using a strategy $\Sigma$
		which takes revised countable support limits, and chooses 
		$\BB_{\alpha} = \BB_{\alpha-1} \ast \Coll(\omega_1, \vp{\BB_{\alpha-1}})$ 
		\cite[Thm. 7.11]{a:viale:semiproper_iterations} at even successor stages $\alpha$.
		
		\item $\SSP$ is $\omega_1$-weakly iterable assuming the existence of a proper class of 
		supercompact cardinals, using a strategy $\Sigma$
		which takes revised countable support limits
		and chooses $\BB_{\alpha} = \BB_{\alpha-1} \ast \dot\CC$, where $\dot{\CC}$ forces $\SP = \SSP$ and collapses $\vp{\BB_{\alpha-1}}$ to have size $\omega_1$ \cite[Thm. 3.4.17]{a:thesis}.%
		\footnote{Such a $\BB$-name for a forcing $\dot\CC$ exists, an example being the lottery iteration (Def. \ref{def:lotteryiteration}) of semiproper forcings of length a supercompact $\kappa> \vp{\BB_{\alpha-1}}$. To ensure the definability of $\Sigma$, this choice can be made canonical by considering the lottery sum of all the Boolean algebras of minimum rank satisfying the required property.}
	\end{itemize}
	
	The definition of weak iterability for a definable class of forcing notions $\Gamma$ provides the 
	right conditions to carry out the \emph{lottery iteration} $\PP^{\Gamma,f}_\kappa$ with respect to a partial function 
	$f: \kappa \to \kappa$ where $\kappa$ is an inaccessible cardinal. The lottery iteration has been studied 
	extensively by 
	Hamkins~\cite{hamkins:lottery_preparation} and is one of the main tools to obtain the consistency of forcing axioms. 
	We will employ these type of iterations in Section \ref{sec:consistency}.
	
	\begin{definition} \label{def:lotteryiteration}
		Let $\Gamma$ be $\gamma$-weakly iterable through $\Sigma$ and $f: \kappa \to \kappa$ be a partial function. 
		Define $\FFF_\xi = \bp{\PP^{\Gamma, f}_\alpha : \alpha < \xi}$ by recursion on $\xi \leq \kappa+1$ as:
		\begin{enumerate}
			\item \label{def:lottery-0} $\FFF_0 = \emptyset$ is the empty iteration system;
			\item \label{def:lottery-1} $\FFF_{\xi+1} = \Sigma(\FFF_\xi)$ if $\xi$ is even;
			\item \label{def:lottery-2} $\FFF_{\xi+2}$ has $\PP^{\Gamma, f}_{\xi+1} = \PP^{\Gamma, f}_\xi$ if $\xi+1$ is odd and $f(\xi)$ is undefined;
			\item \label{def:lottery-3} $\FFF_{\xi+2}$ has $\PP^{\Gamma, f}_{\xi+1} = \PP^{\Gamma, f}_\xi \ast \dot{\CC}$ otherwise, where $\dot{\CC}$ is a $\PP^{\Gamma, f}_\xi$-name for the lottery sum (as computed in $V^{\PP^{\Gamma, f}_\xi}$) of all complete Boolean algebras in $\Gamma$ of rank less than $f(\xi)$, i.e., a $\PP^{\Gamma, f}_\xi$-name for $\prod \cp{\Gamma \cap V_{f(\xi)}}$.
		\end{enumerate}
		We say that $\PP^{\Gamma, f}_\kappa$ is the lottery iteration of $\Gamma$ relative to $f$.
	\end{definition}
	
	\begin{proposition} \label{prop:lottery_iteration}
		Let $T$ be a theory extending $\NBG$ by a recursive set of axioms, $\Gamma$ be $\gamma$-weakly iterable through $\Sigma$, $f: \kappa \to \kappa$ be a partial function with $\kappa > \gamma$ inaccessible cardinal such that $V_{\kappa+1} \models T$. Then:
	\begin{enumerate}
		\item \label{iter_goal-1} $\PP^{\Gamma,f}_\kappa$ exists and is in $\Gamma$;
		\item \label{iter_goal-2} $\PP^{\Gamma,f}_\kappa$ is ${<}\kappa$-cc and for all $\alpha < \kappa$, $\1 \Vdash_{\PP^{\Gamma,f}_\kappa} 2^{\check{\alpha}} \leq \check{\kappa}$;
		\item \label{iter_goal-3} $\PP^{\Gamma,f}_\kappa$ is definable in $V_{\kappa+1}$ using the class parameter $f$;
		\item \label{iter_goal-4} Let $g: \lambda \to \lambda$ with $\lambda$ inaccessible be such that $f = g \res \kappa$, $V_\lambda \models T$. Then $\PP^{\Gamma,g}_\lambda$ absorbs every forcing in $\Gamma \cap V_{g(\kappa)}$ as computed in $V^{\PP^{\Gamma,f}_\kappa}$. That is, for every $\dot{\BB}$ in $(\Gamma \cap V_{g(\kappa)})^{V^{\PP^{\Gamma,f}_\kappa}}$, there is a condition $p \in \PP^{\Gamma,g}_\lambda$ such that $\PP^{\Gamma,g}_\lambda \res p \leq_\Gamma \PP^{\Gamma,f}_\kappa \ast \dot{\BB}$.
	\end{enumerate}
	\end{proposition}
	\begin{proof}
	\emph{}
	
		\begin{enumerate}
			\item
			Follows from $\Sigma$ being a weak iteration strategy for $\alpha < \kappa$ even, and from $\Gamma$ closed under two-step iterations and lottery sums for $\alpha$ odd.

			\item 
			Since $\Sigma \cap V_\kappa$ is $\Sigma$ as computed in $V_{\kappa+1}$, we can prove by induction on $\alpha < \kappa$ that $\vp{\PP^{\Gamma,f}_\alpha} < \kappa$, hence $\PP^{\Gamma,f}_\alpha$ is ${<}\kappa$-cc for all $\alpha<\kappa$. Furthermore, by definition of $\gamma$-weak iteration strategy the set of ordinals:
			\[
			S = \bp{\alpha < \kappa: ~ \PP^{\Gamma,f}_\alpha = \dirlim \cp{\FFF_\kappa \res \alpha}} \supseteq \bp{\alpha < \kappa: ~ \cof(\alpha) = \gamma}
			\]
			is stationary in $\kappa$. It follows by Baumgartner's Theorem \ref{iBaumgartner} that $\dirlim \FFF_\kappa = \invlim \FFF_\kappa$ is ${<}\kappa$-cc. Since $\Sigma$ is a $\gamma$-weak iteration strategy and $\kappa = \cof(\kappa) > \gamma, \vp{\PP^{\Gamma,f}_\alpha}$ for all $\alpha < \kappa$, $\PP^{\Gamma, f}_\kappa = \Sigma(\FFF_\kappa) = \dirlim \FFF_\kappa$ concluding the proof of the first part of the statement.
			
			For the second part, given $\alpha < \kappa$ let $\dot{x}$ be a $\PP^{\Gamma, f}_\kappa$-name for a subset of $\alpha$. Then $\dot{x}$ is decided by $\alpha < \kappa$ antichains of size ${<}\kappa$, hence $\dot{x} = \hat{\imath}_\beta(\dot{y})$%
			\footnote{We recall that $\hat\imath_\beta$ is the natural embedding from $V^{\PP^{\Gamma, f}_\beta}$ to $V^{\PP^{\Gamma, f}_\kappa}$ (see Proposition \ref{prop:ihat}).}
			for some $\dot{y} \in V^{\PP^{\Gamma, f}_\beta}$, $\beta < \kappa$. 
			Since $\vp{\PP^{\Gamma, f}_\beta} < \kappa$ and $\kappa$ is inaccessible, there are less than $\kappa$-many names for subsets of $\alpha$ in $V^{\PP^{\Gamma, f}_\beta}$. Thus there are at most $\kappa$-many names for subsets of $\alpha$ in $V^{\PP^{\Gamma, f}_\kappa}$.
			
			\item
			Straigthforward, given that $\ap{\Gamma \cap V_\kappa, \Sigma \cap V_\kappa}$ is $\ap{\Gamma, \Sigma}$ as computed in $V_{\kappa+1}$.
	
			\item
			Let $g : \lambda \to \lambda$ and $\lambda$ inaccessible be such that $f = g \res \kappa$ and $V_{\lambda+1} \models T$. Since $\ap{\Gamma \cap V_\kappa, \Sigma \cap V_\kappa}$ is $\ap{\Gamma, \Sigma}$ as computed in $V_{\kappa+1}$ and the same holds for $\lambda$, letting $\FFF=\{\PP^{\Gamma,g}_\alpha:\alpha<\lambda\}$, we have that 
			\[
			\FFF\res\kappa=\{\PP^{\Gamma,g}_\alpha:\alpha<\kappa\} = \{\PP^{\Gamma,f}_\alpha:\alpha<\kappa\}.
			\]
			Hence $\PP^{\Gamma,f}_\kappa=\PP^{\Gamma,g}_\kappa$.
			Furthermore any $\dot{\BB}$ in $(\Gamma \cap V_{g(\kappa)})^{V^{\PP^{\Gamma,f}_\kappa}}$ is forced by $\PP^{\Gamma,f}_\kappa$ to be the restriction of $\dot{\CC} = \prod \cp{\Gamma \cap V_{g(\kappa)}}^{V^{\PP^{\Gamma,f}_\kappa}}$ to a suitable condition $\dot{q}$ applying Definition \ref{def:lotteryiteration}.(\ref{def:lottery-3}) in $V^{\PP^{\Gamma,f}_\kappa}$ ($\dot{q}$ is forced by $\PP^{\Gamma,f}_\kappa$ to be $\1$ in the component of $\dot{\CC}$ corresponding to $\dot{\BB}$ and $\0$ in all the other components of $\dot{\CC}$).
			Hence we can find $p \in \PP^{\Gamma,g}_\lambda$ such that $\PP^{\Gamma,g}_{\kappa+1}\res (p\res{\kappa+1})\cong 
			\PP^{\Gamma,f}_\kappa\ast\dot{\BB}$. Therefore,
			\[
			\PP^{\Gamma,f}_\kappa \ast \dot{\BB} ~\cong~ 
			\PP^{\Gamma,g}_{\kappa+1} \res (p\res{\kappa+1}) 
			~\geq_\Gamma~ \PP^{\Gamma,g}_\lambda  \res p. \qedhere
			\]
		\end{enumerate}
	\end{proof}

\subsection{Forcing axioms as density properties} \label{sec:density}

	The resurrection axiom, introduced by Hamkins and Johnstone in \cite{hamkins:resurrection_uplifting}, 
	can be naturally phrased as a density property for the class partial order $(\Gamma,\leq_\Gamma)$.

	\begin{definition}[Hamkins, Johnstone~\cite{hamkins:resurrection_uplifting}] \label{def:hj_ra}
		The \emph{resurrection axiom} $\RA(\Gamma)$ is the assertion that the class
		\[
		\bp{\BB \in \Gamma: ~~ H_\cc \prec H_\cc^\BB}
		\]
		is dense in $\cp{\Gamma, \leq_\Gamma}$. 

		The \emph{weak resurrection axiom} $\wRA(\Gamma)$ is the assertion that for all $\BB \in \Gamma$, there exists a $\CC \leq_{\ALL} \BB$ such that $H_\cc \prec H_\cc^\CC$.
	\end{definition}

	We can also reformulate in a similar way many of the common forcing axioms. We first recall the following standard definitions.

	\begin{definition}
		The bounded forcing axiom $\BFA_\kappa(\Gamma)$ holds if for all complete Boolean algebras 
		$\BB\in\Gamma$ and all families 
		$\{D_\alpha:\alpha<\kappa\}$ of predense subsets of $\BB$ of size at most $\kappa$, there is a filter 
		$G\subset\BB$ meeting all these sets.

		The forcing axiom $\FA_\kappa(\Gamma)$ holds if for all 
		$\BB\in\Gamma$ and all families $\{D_\alpha:\alpha<\kappa\}$ of predense subsets of 
		$\BB$, there is a filter $G\subset\BB$ meeting all these sets.

		The forcing axiom $\FA^{++}_{\omega_1}(\Gamma)$ holds if for all $\BB\in\Gamma$ and all families 
		$\{D_\alpha:\alpha<\omega_1\}$ of predense subsets of $\BB$ and all families 
		$\{\dot{S}_\alpha:\alpha<\omega_1\}$ of $\BB$-names for stationary subsets of $\omega_1$, 
		there is a filter $G\subset\BB$ meeting all these dense sets and evaluating each $\dot{S}_\alpha$ 
		as a stationary subset of $\omega_1$. 
		
		$\FA^{++}_{\omega_1}(\SSP)$ is better known under the name $\MM^{++}$. $\BFA(\Gamma)$ is a shorthand for $\BFA_{\omega_1}(\Gamma)$ and $\FA(\Gamma)$ a shorthand for $\FA_{\omega_1}(\Gamma)$.
	\end{definition}

	\begin{theorem}[Bagaria, \cite{bagaria:bfa_absoluteness}]
		$\BFA(\Gamma)$ is equivalent to the assertion that the class
		\[
		\Gamma_0 = \bp{\BB \in \Gamma: ~~ H_{\omega_2} \prec_1 V^\BB}\footnotemark
		\]
		\footnotetext{We recall that this notation is introduced in Section \ref{ssec:notation}.}
		is dense in $\cp{\Gamma, \leq_{\ALL}}$.
	\end{theorem}
	
	We remark that the latter assertion is actually equivalent to requiring the class $\Gamma_0$ to coincide with the whole $\Gamma$ (since $\Sigma_1$-formulae are always upwards absolute).

	Under suitable large cardinal assumptions the unbounded versions of the forcing axioms can also be reformulated as density properties, but only for $\Gamma = \SSP$ (at least to our knowledge).

	\begin{theorem}[Woodin, \cite{larson:stationary_tower}]
		Assume there are class-many Woodin cardinals. 
		Then $\MM$ (i.e., $\FA(\SSP)$) is equivalent to the assertion that the class
		\[
		\bp{\BB \in \SSP: ~~ \BB \text{ is a presaturated tower forcing}}\footnotemark
		\]
		\footnotetext{We refer the reader to~\cite[Def. 5.8]{viale:category_forcing} for a definition of presaturated and strongly presaturated tower forcings.}
		is dense in $\cp{\SSP, \leq_{\ALL}}$.
	\end{theorem}

	\begin{theorem}[Viale, \cite{viale:category_forcing}]
		Assume there are class-many Woodin cardinals. Then $\MM^{++}$ is equivalent to the assertion that the above class is dense in $\cp{\SSP, \leq_{\SSP}}$.
	\end{theorem}

	 In this paper we also refer to the following strengthening of $\MM^{++}$, which is defined by a density property of the class $\SSP$ as follows.

	\begin{definition}[Viale, \cite{viale:category_forcing}]
		$\MM^{+++}$ is the assertion that the class
		\begin{equation*}
			\bp{\BB \in \SSP: ~~ \BB \text{ is  a strongly presaturated tower}}
		\end{equation*}
		is dense in $\cp{\SSP, \leq_{\SSP}}$.
	\end{definition}

	\section{Iterated resurrection and absoluteness} \label{sec:absoluteness}

	We now introduce the iterated resurrection axioms $\RA_\alpha(\Gamma)$ with $\Gamma$ a definable class of complete Boolean algebras, and prove that the iterated resurrection axiom $\RA_\omega(\Gamma)$ gives generic absoluteness for the first-order theory (with parameters) of $H_{\gamma^+}$ for a certain cardinal $\gamma=\gamma_\Gamma$ which is computed in terms of the combinatorial properties of $\Gamma$. In particular, we aim to choose $\gamma$ as large as possible while still being able to consistently prove the generic absoluteness of the theory of $H_{\gamma^+}$ with respect to $\Gamma$, at the same time maintaining that the axioms $\RA_\alpha(\Gamma_0),\RA_\alpha(\Gamma_1)$ with $\gamma_{\Gamma_0}\neq\gamma_{\Gamma_1}$ can be mutually compatible, if each $\Gamma_i$ is chosen properly.
	We are inspired by the resurrection axioms introduced by Hamkins and Johnstone in \cite{hamkins:resurrection_uplifting}, which are formulated in similar terms for $H_{\cc}$ and $\Gamma$ among the usual classes for which forcing axioms have been extensively studied.

	\begin{definition}
		Let $\Gamma$ be a definable  class of complete Boolean algebras closed under two step iterations.		
		The \emph{cardinal preservation degree} $\cpd(\Gamma)$ of $\Gamma$ is the maximum cardinal $\kappa$ such that every $\BB \in \Gamma$ forces that every cardinal $\nu \leq \kappa$ is still a cardinal in $V^\BB$. If all cardinals are preserved by $\Gamma$, we say that $\cpd(\Gamma) = \infty$.

		The \emph{distributivity degree} $\dd(\Gamma)$ of $\Gamma$ is the maximum cardinal 
		$\kappa$ such that every $\BB \in \Gamma$ is ${<}\kappa$-distributive.
	\end{definition}

	We remark that the supremum of the cardinals preserved by $\Gamma$ is preserved by $\Gamma$, and the same holds for the property of being ${<}\kappa$ distributive. Furthermore, $\dd(\Gamma) \leq \cpd(\Gamma)$ and $\dd(\Gamma)\neq\infty$ whenever $\Gamma$ is non trivial (i.e., it contains a Boolean algebra that is not forcing equivalent to the trivial Boolean algebra).
	Moreover $\dd(\Gamma)=\cpd(\Gamma)$ whenever $\Gamma$ is closed under two steps iterations and contains the class of ${<}\cpd(\Gamma)$-closed posets.
	
	\begin{definition} \label{def:gamma_gamma}
		Let $\Gamma$ be a definable class of complete Boolean algebras. We let $\gamma = \gamma_\Gamma = \cpd(\Gamma)$.
	\end{definition}
	
	For example, $\gamma = \omega$ if $\Gamma $ is the class of all posets, while for axiom-$A$, proper, $\SP$, $\SSP$ we have that $\gamma = \omega_1$ and for ${<}\kappa-$closed we have that $\gamma = \kappa$.
	
	We aim to isolate for each cardinal $\gamma$ classes of forcings $\Delta_\gamma$ and 
	axioms $\AX(\Delta_\gamma)$ such that:
	\begin{enumerate}
		\item \label{aim:resurrection-1}
		$\gamma=\cpd(\Delta_\gamma)$ and assuming certain large cardinal axioms, the family of $\BB\in \Delta_\gamma$ which force $\AX(\Delta_\gamma)$ is dense in $(\Delta_\gamma,\leq_{\Delta_\gamma})$;
		\item \label{aim:resurrection-2}
		$\AX(\Delta_\gamma)$ gives generic absoluteness for the theory with parameters of $H_{\gamma^+}$ with respect to all forcings in $\Delta_\gamma$ which preserve $\AX(\Delta_\gamma)$;
		\item
		the axioms $\AX(\Delta_\gamma)$ are mutually compatible for the largest possible family of cardinals $\gamma$ simultaneously;
		\item
		the classes $\Delta_\gamma$ are the largest possible for which the axioms $\AX(\Delta_\gamma)$ can possibly be consistent.
	\end{enumerate}

	Towards this aim remark the following:
	\begin{itemize} 
		\item
		$\dd(\Gamma)$ is the least possible cardinal $\gamma$ such that $\AX(\Gamma)$ is a non-trivial axiom asserting generic absoluteness for the theory of $H_{\gamma^+}$ with parameters. In fact, $H_{\dd(\Gamma)}$ is never changed by forcings in $\Gamma$. 
		\item
		$\cpd(\Gamma)$ is the maximum possible cardinal $\gamma$ for which an axiom $\AX(\Gamma)$ as above can grant generic absoluteness with respect to $\Gamma$ for the theory of $H_{\gamma^+}$ with parameters. To see this, let $\Gamma$ be such that $\cpd(\Gamma) = \gamma $ and assume towards a contradiction that there is an axiom $\AX(\Gamma)$ yielding generic absoluteness with respect to $\Gamma$ for the theory with parameters of $H_{\lambda}$ with $\lambda>\gamma^+$.

		Assume that $\AX(\Gamma)$ holds in $V$. Since $\cpd(\Gamma) = \gamma$, there exists a $\BB \in \Gamma$ which collapses $\gamma^+$. Let $\CC \leq_\Gamma \BB$ be obtained by property (\ref{aim:resurrection-1}) above for
		$\Gamma=\Delta_\gamma$, so that $\AX(\Gamma)$ holds in $V^\CC$, and remark that 
		$\gamma^+$ cannot be a cardinal in $V^\CC$ as well. 
		Then $\gamma^+$ is a cardinal in $H_{\lambda}$ and not in $H_{\lambda}^\CC$, witnessing failure of generic absoluteness and contradicting property (\ref{aim:resurrection-2}) for $\AX(\Gamma)$.
	\end{itemize}

	In the remainder of this paper we will see that the axiom $\RA_\omega(\Gamma)$ satisfies the first two of the above requirements, and is consistent for a variety of forcing classes $\Gamma$
	which also provide natural examples for the last two requirements. 
	We will come back later on with philosophical considerations outlining why the last two requirements are also natural.
	In particular, we will prove the consistency of $\RA_\omega(\Gamma)$ for forcing classes which are definable in $\NBG$, weakly iterable (see Definition \ref{def:weak_iterable}) and satisfying the following property.

	\begin{definition}
		Let $\Gamma$ be a definable class of complete Boolean algebras. We say that $\Gamma$ is \emph{well behaved} iff it is closed under two-step iterations, $\gamma = \cpd(\Gamma)<\infty$, and for all $\kappa > \gamma$ there are densely many $\BB \in \Gamma$ collapsing $\kappa$ to $\gamma$.
	\end{definition}

	The above requirement ensures that the cardinal preservation degree of $\Gamma$ gives a uniform bound for the cardinals which are preserved by the forcings in $\Gamma$. This property is easily checked to hold for all $\Gamma$ containing the ${<}\cpd(\Gamma)$-closed forcings and closed under two steps iterations. Throughout the remainder of this paper we will focus on classes $\Gamma$ which are well behaved with $\gamma = \gamma_\Gamma=\cpd(\Gamma)$ given by Definition \ref{def:gamma_gamma}, and implicitly assume this property whenever needed. In order to prove the consistency of $\RA_\alpha(\Gamma)$, we will also need to assume that $\Gamma$ is weakly iterable (see Definition \ref{def:weak_iterable}).


\subsection{Resurrection games}

	Motivated by Hamkins and Johnstone's~\cite{hamkins:resurrection_uplifting}, as well as by Tsaprounis'~\cite{tsaprounis:on_resurrection}, we introduce the following new class games and corresponding forcing axioms.

		\begin{definition}
			Let $\Gamma$ be well-behaved with $\cpd(\Gamma)=\gamma$.
			The $\Gamma$-weak resurrection game $\GGG^{\wRA}$ is as follows. 
			Player I (\emph{Kill}) plays couples $(\alpha_n, \BB_n)$ where $\alpha_n$ is an 
			ordinal such that $\alpha_{n+1} < \alpha_n$ and $\BB_n$ is such that 
			$\BB_{n+1} \leq_\Gamma \CC_n$. Player II (\emph{Resurrect}) 
			plays Boolean algebras $\CC_n$ such that 
			$H_{\gamma^+}^{\CC_n} \prec H_{\gamma^+}^{\CC_{n+1}}$ and $\CC_n \leq_{\ALL} \BB_n$. 
			The last player who can move wins.

			The $\Gamma$-resurrection game $\GGG^{\RA}$ is the same game as $\GGG^{\wRA}$ with 
			the additional requirement for Player II (\emph{Resurrect}) to play so that 
			$\CC_n \leq_\Gamma \BB_n$ for all $n$.
		\end{definition}

		\begin{definition}
			The \emph{$\alpha$-weak resurrection axiom} $\wRA_\alpha(\Gamma)$ is the assertion that 
			Player II (\emph{Resurrect}) wins the $\Gamma$-weak resurrection game after 
			$\ap{(\alpha, \2), \2}$.\footnote{We recall that $\2$ denotes the two-valued Boolean algebra $\bp{\0, \1}$.}

			The \emph{$\alpha$-resurrection axiom} 
			$\RA_\alpha(\Gamma)$ is the assertion that Player II (\emph{Resurrect}) wins the 
			$\Gamma$-resurrection game after $\ap{(\alpha, \2), \2}$.

			We say that $\wRA_{\ON}(\Gamma)$ (respectively $\RA_{\ON}(\Gamma)$) holds iff the corresponding axioms hold for all $\alpha \in \ON$.
		\end{definition}

		Note that all these games are clopen class games, since Player I plays descending sequences of ordinals. Since we require $\Gamma$ to be first-order definable, the class tree of partial plays is first-order definable as well and the corresponding axioms asserting the existence of a class winning strategy for Player II in the relevant game (equivalently, that all strategies in the relevant game are not winning for Player I) are $\Delta^1_1$-statements\footnote{We recall that we use $\Delta^1_1$ as a shorthand for $\Delta^1_1(\MK)$, as stated in Section \ref{sec:class_games}.} as showed in Proposition \ref{prop:winningdelta}.

		A posteriori the axioms $\RA_\alpha(\Gamma)$ can also be formulated by a recursive process using the axioms $\RA_\beta(\Gamma)$ for $\beta < \alpha$, as we will see in the next proposition. However, this type of formulation cannot be directly used as a definition in $\LLL^2$. We will come back to this delicate point with more details after the proof of the next proposition.

		\begin{proposition}[$\ADF$] \label{prop:radef}
			$\wRA_\alpha(\Gamma)$ holds iff for all $\beta < \alpha$ and $\BB \in \Gamma$ there is a $\CC \leq_{\ALL} \BB$ such that $H_{\gamma^+} \prec H_{\gamma^+}^\CC$ and $V^\CC \models \wRA_\beta(\Gamma)$.

			Similarly, $\RA_\alpha(\Gamma)$ holds iff the same holds with $\CC \leq_\Gamma \BB$; 
			or equivalently for all $\beta < \alpha$ the class
			\[
			\bp{\BB \in \Gamma: ~~ H_{\gamma^+} \prec H^\BB_{\gamma^+} \wedge 
			V^\BB \models \RA_\beta(\Gamma)}
			\]
			is dense in $\cp{\Gamma, \leq_\Gamma}$.
		\end{proposition}
		\begin{proof}
		We divide the proof in two steps.
		\begin{description}
		\item[\textbf{Step 1}]
			First we prove that for any $\CC$, $\wRA_\beta(\Gamma)$ holds in $V^\CC$ iff Player II (\emph{Resurrect}) wins $\GGG^{\wRA}$ in $V$ after $\ap{(\beta, \CC), \CC}$.
			
			Let $G$ be $V$-generic for $\CC$ and $\DD \leq_{\ALL} \CC$ with witnessing map $i: \CC \to \DD$ ($i$ being the identity for $\DD=\CC$), and define $\DD_G := \DD /_{i[G]}$ in $V[G]$. Let $s$ in $\GGG^{\wRA}$ extend $\ap{(\beta, \CC), \CC}$ and define $s_G$ as the sequence obtained by substituting every Boolean algebra $\DD$ appearing in $s$ with $\DD_G$. 
			By $\ADF$, let $\sigma$ be a winning strategy for Player I or II in $\GGG^{\wRA}$ after $\ap{(\beta, \CC), \CC}$. Then $\sigma_G = \bp{s_G: ~ s \in \sigma}$ is a winning strategy for the same player in the corresponding game in $V[G]$ after $\ap{(\beta, \2), \2}$. It follows that $\GGG^{\wRA} \res \ap{(\beta, \2), \2}$ is determined in $V[G]$ 
			and Player I (\emph{Kill}) (resp. II (\emph{Resurrect})) wins
			$\GGG^{\wRA} \res \ap{(\beta, \2), \2}$  in $V[G]$ if and only if she
			(he) wins the corresponding game $\GGG^{\wRA} \res \ap{(\beta, \CC), \CC}$ in $V$.

			Furthermore, we observe that whenever there is a winning strategy $\sigma$ for Player I or II in $\GGG^{\wRA}$ and $s = \ap{(\alpha,\2),\2,(\beta,\BB),\CC}$ is in $\sigma$, we can define a winning strategy for the same player $\sigma_s = \bp{\ap{(\beta,\CC),\CC}^\smallfrown u: ~ s^\smallfrown u \in \sigma}$ in the game $\GGG^{\wRA} \res \ap{(\beta, \CC), \CC}$.

		\item[\textbf{Step 2}]
			We use the above results to prove the Lemma.

			First suppose that $\wRA_\alpha(\Gamma)$ holds, and fix $\beta < \alpha$, $\BB \in \Gamma$. Let $\sigma$ be a winning strategy for Player II (\emph{Resurrect}) in $\GGG^{\wRA}$ after $s_0 = \ap{(\alpha,\2),\2}$. Then $s_1 = s_0^\smallfrown {(\beta,\BB)} \in \sigma$ and there is exactly one $s_2 \vartriangleleft s_1$\footnote{We remark that $s_2 \vartriangleleft s_1$ iff $s_1 = s_2 \res (\vp{s_2}-1)$.} in $\sigma$, $s_2 = s_1^\smallfrown {\CC}$ with $\CC \leq_{\ALL} \BB$ and $H_{\gamma^+}^\2 = H_{\gamma^+} \prec H_{\gamma^+}^\CC$. Moreover, $\sigma_{s_2}$ is a winning strategy for Player II (\emph{Resurrect}) in $\GGG^{\wRA}$ after $\ap{(\beta,\CC),\CC}$, hence $V^\CC \models \wRA_\beta(\Gamma)$.

			Conversely, suppose that for all $\beta < \alpha$, $\BB \in \Gamma$ there is a $\CC$ such that $\wRA_\beta(\Gamma)$ holds in $V^\CC$ and $H_{\gamma^+} \prec H_{\gamma^+}^\CC$. Assume towards a contradiction that $\wRA_\alpha(\Gamma)$ fails. Then by $\ADF$ Player I (\emph{Kill}) has a winning strategy $\sigma$ in $\GGG^{\wRA}$ after $s_0 = \ap{(\alpha,\2),\2}$, and there is exactly one $s_1 = s_0^\smallfrown {(\beta,\BB)} \in \sigma$. Let $\CC$ be such that $\wRA_\beta(\Gamma)$ holds in $V^\CC$, $H_{\gamma^+} \prec H_{\gamma^+}^\CC$. Then $s_2 = s_1^\smallfrown {\CC}$ is a valid move (hence is in $\sigma$) and by the first part of this proof, since $V^\CC \models \wRA_\beta(\Gamma)$, Player II (\emph{Resurrect}) wins the game $\GGG^{\wRA} \res \ap{(\beta, \CC), \CC}$. Since $\sigma_{s_2}$ is a winning strategy for Player I (\emph{Kill}) in the same game, we get a contradiction.
		\end{description}
		Similar arguments yield the thesis also for $\RA_\alpha(\Gamma)$.
	\end{proof}

	\begin{remark} \label{rmk:firstorder}
		Assume $\ap{V,\mathcal{C}}$ is a model of $\MK$. Let $\phi_0(x, y,\gamma)$ be the formula
		\[
		\phi_0(x, y,\gamma) ~\equiv~ x,\, y\text{ are complete Boolean algebras and }H_{{\gamma^+}}^x \prec H_{{\gamma^+}}^y
		\]
		and for all $n < \omega$, let $\phi_{n+1}(x, y,\gamma)$ be the formula
		\[
		\phi_{n+1}(x, y,\gamma) ~\equiv~ \phi_0(x, y,\gamma) ~ \wedge \cp{\forall z \leq_\Gamma y ~ \exists w \leq_\Gamma z ~ \phi_n(y, w,\gamma)}.
		\]
		Then for all $n < \omega$ the assertion  $\phi_{n}(\2, \2,\gamma)$ is equivalent to $\RA_{n}(\Gamma)$ in $\ap{V,\mathcal{C}}$ and it is a formula with no class quantifier. In particular we get that
		\[
		\ap{V,\mathcal{C}} \models \RA_n(\Gamma) \quad \Longleftrightarrow \quad V \models \phi_{n}(\2,\2,\gamma).
		\]
		hence the formulae $\phi_{n}(\2,\2,\gamma)$ can be used as a first-order formulation of the axioms 
		$\RA_n(\Gamma)$ expressible in $\ZFC$ with no sort for class variables (provided $\Gamma,\gamma$ are both definable in 
		$\ZFC$). However, if 
		\[
		\ap{V,\mathcal{C}}\models\RA_\omega(\Gamma)
		\] 
		we can infer that for all $n < \omega$, $V \models \phi_{n}(\2,\2)$ but it is not at all clear whether 
		we can express in the structure $V$ that $\RA_\omega(\Gamma)$ holds. In fact, the 
		simplest strategy to express 
		this property of $V$ would require us to perform an infinite conjunction of the formulae 
		$\phi_{n}(\2,\2)$ for all $n<\omega$, thus getting out of first-order syntax.

		This problem can be sidestepped in models of $\MK$ appealing to the class-game formulation of these axioms.
	\end{remark}

	From now on, we focus on the recursive formulation\footnote{Recall that we assume to work in $\MK$, thus $\ADF$ holds and the recursive formulation is equivalent to the definition.} of the $\alpha$-resurrection axioms given by the latter proposition in order to prove the main results by induction on $\alpha$. The same will be done with the subsequent Definitions \ref{def:uplifting}, \ref{def:menas} of other class games.
	Note that by Proposition \ref{prop:radef}, $\wRA_0(\Gamma)$, $\RA_0(\Gamma)$ hold vacuously true for any $\Gamma$, hence $\wRA_1(\Gamma)$, $\RA_1(\Gamma)$ imply the non-iterated formulations of resurrection axioms given in \cite{hamkins:resurrection_uplifting} provided that $\gamma^+ \geq \cc$.

	These different forcing axioms are connected by the following implications:
	\begin{itemize}
		\item if $\beta < \alpha$, $\wRA_\alpha(\Gamma) \Rightarrow \wRA_\beta(\Gamma)$ 
		(same with $\RA$),
		\item if $\Gamma \subseteq \Delta$ and $\gamma_{\Gamma} \leq \gamma_\Delta$, 
		$\wRA(\Delta) \Rightarrow \wRA(\Gamma)$,
		\item $\RA_\alpha(\Gamma) \Rightarrow \wRA_\alpha(\Gamma)$, since we assume that $\Gamma$ is closed under two step iterations: the winning strategy $\sigma$ for II in $\GGG^{\RA}$ starting from $\ap{(\alpha, \2), \2}$ can also be used in $\GGG^{\wRA}$ and will force I to play always a $\BB_n$ in $\Gamma$. In particular $\sigma$ will remain a winning strategy also in $\GGG^{\wRA}$.
	\end{itemize}

	We are mainly interested in $\RA_\alpha(\Gamma)$, even though $\wRA_\alpha(\Gamma)$ will be convenient to state certain theorems in a modular form (thanks to its monotonic behavior with respect to $\Gamma$).
	Some relevant implications can be drawn between iterated resurrection axioms, Woodin's generic absoluteness for $L(\mathbb{R})$, and the usual forcing axioms.

	\begin{proposition} \label{prop:woodinrall}
		Assume there are class-many Woodin cardinals. Then $\RA_\ON(\ALL)$ holds.
	\end{proposition}
	\begin{proof}
		With these assumptions, we know that $H_{\omega_1}\prec H_{\omega_1}^\BB$ for any $\BB$. 
		Hence a winning strategy (among many) for II in $\GGG^{\RA(\ALL)}$ is to always play the same Boolean algebra I has played in the preceding move until I has to play the ordinal $0$. Then II can still move, but I cannot
		and loses.
	\end{proof}

	\begin{theorem}
		$\wRA_1(\Gamma)$ implies $H_{\gamma^+} \prec_1 V^\BB$ for all $\BB \in \Gamma$ and $\BFA_\kappa(\Gamma)$ for all $\kappa < \gamma^+$.
	\end{theorem}
	\begin{proof}
		Let $\BB$ be any Boolean algebra in $\Gamma$, and $\CC \leq_{\ALL} \BB$ be such that $H_{\gamma^+} \prec H_{\gamma^+}^\CC$, hence $H_{\gamma^+} \prec H_{\gamma^+}^\CC \prec_1 V^\CC$ by Levy's absoluteness. Let $\phi \equiv \exists x \psi(x)$ be a $\Sigma_1$ formula. If $\phi$ holds in $H_{\gamma^+}$, it trivially holds in $V^\BB$ since $\Sigma_1$ formulae are upwards absolute. If $\phi$ holds in $V^\BB$, it holds in $V^\CC$ as well hence in $H_{\gamma^+}$, concluding the first part.

		Let now $\BB$ be in $\Gamma$ and $\DDD$ be a family of $\kappa$-many predense subsets of 
		$\BB$ of size at most $\kappa$. Let $\BB' \subseteq \BB$ be the Boolean algebra (possibly not in $\Gamma$) 
		finitely generated by $\bigcup \DDD$ in $\BB$, 
		so that $\vp{\BB'} \leq \kappa$. Without loss of generality we can assume that both $\BB'$ and 
		$\DDD$ are in $H_{\kappa^+} \subseteq H_{\gamma^+}$ by replacing $\BB$ with 
		an isomorphic copy if necessary. Let $G$ be a $V$-generic filter for $\BB$. 
		Then $G$ meets every predense in $\DDD$, that is,
		\[
		V[G] \models \exists F \subseteq \BB' \text{ filter } \wedge \forall A \in \DDD ~ F \cap A \neq \emptyset
		\]
		and since $H_{\gamma^+} \prec_1 V^\BB$, $H_{\gamma^+}$ has to model the same completing the proof.
	\end{proof}
	
	\begin{theorem}
		Assume there are class-many super huge cardinals.\footnote{A cardinal $\kappa$ is \emph{super huge} iff for every ordinal $\alpha$ there exists an elementary embedding $j: V \to M\subseteq V$ with $\crit(j) = \kappa$, $j(\kappa) > \alpha$ and ${}^{j(\kappa)}M \subseteq M$.} 
		
		Then $\MM^{+++}$ implies $\RA_{\ON}(\SSP)$.
	\end{theorem}
	The proof of this theorem requires the reader to be familiar with the second 
	author's work~\cite{viale:category_forcing}.
	\begin{proof}
		Recall that $\gamma_{\SSP} = \omega_1$ and $\MM$ implies that $2^\omega = 2^{\omega_1} = \omega_2$. 
		We prove that $\MM^{+++}$ implies $\RA_\alpha(\SSP)$ by induction on $\alpha$. 
		For $\alpha = 0$ there is nothing to prove, 
		suppose now that $\alpha > 0$ and the thesis holds for all $\beta<\alpha$.

		Let $A$ be the class of all super huge cardinals in $V$. Let $\UU^{\SSP}_\delta$ be the forcing whose condition are the $\SSP$ complete Boolean algebras in $\SSP\cap V_\delta$ ordered by $\leq_{\SSP}$. Since $A$ is a proper class, by \cite[Thm. 3.5, Lemma 3.12]{viale:category_forcing} the class $\bp{\UU^{\SSP}_\delta: ~ \delta \in A}$ is predense in $(\SSP,\leq_{\SSP})$. Moreover $H_{2^{\omega_1}} = H_{\omega_2} \prec H_{\omega_2}^{\UU^{\SSP}_\delta}$ for all $\delta\in A$, since by \cite[Lemma 5.19]{viale:category_forcing} $\UU^{\SSP}_\delta$ is forcing equivalent to a (strongly) presaturated tower for any such $\delta$. Finally, by \cite[Cor. 5.20]{viale:category_forcing}, every such $\UU^{\SSP}_\delta$ forces $\MM^{+++}$ and preserves that there are class-many super huge cardinals (since these large cardinals are indestructible by small forcings). It follows by inductive hypothesis that every such $\UU^{\SSP}_\delta$ forces $\RA_\beta(\SSP)$ for any $\beta < \alpha$ as well; hence $\RA_\alpha(\SSP)$ holds in $V$.
	\end{proof}

	We remark that similar results were obtained by Hamkins and Johnstone from their formulation of the resurrection axiom $\RA(\Gamma)$ (see Definition \ref{def:hj_ra}). In fact, $\RA_1(\Gamma)$ is almost the same as their axiom $\RA(\Gamma)$ whenever $\gamma=\omega_1$ and $\gamma^+ = \omega_2 = \cc$.
	However, even in this case, there are some subtle differences. For instance, let $\Gamma$ be such that $\gamma = \omega_1$, $2^\omega = 2^{\omega_1} = \omega_2$ and $\Add(\omega_1, 1)$ is in $\Gamma$ (e.g., $\Gamma$ is among countably closed, axiom-$A$, proper, semiproper, $\SSP$). Then:
	\begin{itemize}
	\item
		$\Add(\omega_1,1)$ preserves $\RA(\Gamma)$. Assume that $\RA(\Gamma)$ holds in $V$ and let $G$ be $V$-generic for $\Add(\omega_1,1)$  so that $2^\omega=\omega_1$ in $V[G]$ and $H_{\cc}^{V[G]}=H_{\omega_1}^{V[G]}=H_{\omega_1}^{V}$. Then, any $\BB$ in $\Gamma^{V[G]}$ which resurrects the theory of $H_{\cc}^V$ will also resurrect the theory of $H_{\cc}^{V[G]}=H_{\omega_1}^{V[G]}=H_{\omega_1}^{V}$. Since there are densely many 
		$\BB$ resurrecting the theory of $H_\cc^V$ in $V$, we get that there are densely many $\BB$ in $\Gamma^{V[G]}$
		resurrecting the theory of $H_{\omega_1}^{V[G]}$ .
		\item
		There is no reason to expect that $\Add(\omega_1,1)$ preserves $\RA_1(\Gamma)$. 
		In fact, in this latter case we want to resurrect the theory of $H_{\omega_1^+}$ as computed in $V[G]$ and 
		$H_{\omega_1^+}^{V[G]} \supseteq H_{\omega_2^+}^V$ since $\Add(\omega_1,1)$ always collapses 
		$\omega_1^{\omega}$, which is $\omega_2$ in our case. But the theory of 
		$H_{\omega_2^+}^V$ is by no means controlled by $\RA_1(\Gamma)$.
	\end{itemize}


\subsection{Resurrection axioms and generic absoluteness}

	The main motivation for the iterated resurrection axioms can be found in the following result.

	\begin{theorem} \label{thm:absoluteness}
		Suppose $n \in \omega$, $\Gamma$ is well behaved, 
		$\RA_n(\Gamma)$ holds, and $\BB \in \Gamma$ forces $\RA_n(\Gamma)$. 
		Then $H_{\gamma^+} \prec_n H_{\gamma^+}^\BB$ (where $\gamma=\gamma_\Gamma$).
	\end{theorem}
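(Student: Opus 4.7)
The plan is to proceed by induction on $n$, using the recursive characterization of $\RA_n(\Gamma)$ supplied by Proposition~\ref{prop:radef}. The base case $n=0$ is automatic since $\Sigma_0$-formulas are absolute between the transitive classes $H_\cc^V \subseteq H_\cc^{V[G]}$ whenever $G$ is $V$-generic for $\BB$.

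For the inductive step, fix $\BB \in \Gamma$ with $V \models \RA_{n+1}(\Gamma)$ and $\BB \Vdash \RA_{n+1}(\Gamma)$, and let $G$ be $V$-generic for $\BB$. The density formulation of $\RA_{n+1}(\Gamma)$ in $V$ yields a $\CC \in \Gamma$ with $\CC \leq_\Gamma \BB$, witnessed by a complete homomorphism $i\colon \BB \to \CC$, such that $H_\cc^V \prec H_\cc^{V^\CC}$ and $\CC \Vdash \RA_n(\Gamma)$. By the standard factorization of two-step iterations, $\DD := \CC/_{i[G]}$ is in $\Gamma$ as computed in $V[G]$, and for any $V[G]$-generic $K$ for $\DD$ there is a corresponding $V$-generic $H$ for $\CC$ (extending $G$ via $i$) with $V[H] = V[G][K]$. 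Thus $V[H]$ simultaneously extends $V$ via $\CC$, with full elementarity $H_\cc^V \prec H_\cc^{V[H]}$, and $V[G]$ via $\DD \in \Gamma$.

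The next step is to verify that the inductive hypothesis applies in $V[G]$ with forcing $\DD$: in $V[G]$ the axiom $\RA_n(\Gamma)$ holds (since $V[G]\models\RA_{n+1}(\Gamma)\Rightarrow\RA_n(\Gamma)$), and $\DD$ forces $\RA_n(\Gamma)$ over $V[G]$ because every $\DD$-extension of $V[G]$ is of the form $V[H] = V[G][K]$ for some $V$-generic $H$ for $\CC$, all of which satisfy $\RA_n(\Gamma)$ by the choice of $\CC$. The inductive hypothesis therefore yields $H_\cc^{V[G]} \prec_n H_\cc^{V[H]}$. Combined with the full elementarity $H_\cc^V \prec H_\cc^{V[H]}$, the desired $H_\cc^V \prec_{n+1} H_\cc^{V[G]}$ follows by a routine two-step argument for $\Sigma_{n+1}$-formulas $\exists x\,\psi(x,\vec p)$ with $\psi \in \Pi_n$ and $\vec p \in H_\cc^V$: upwards, a witness in $H_\cc^V$ is pushed to $H_\cc^{V[H]}$ by full elementarity and its $\Pi_n$-truth descends to $H_\cc^{V[G]}$ via $\prec_n$; downwards, a witness in $H_\cc^{V[G]}$ produces a $\Sigma_{n+1}$-truth in $H_\cc^{V[H]}$ via $\prec_n$, which reflects down to $H_\cc^V$ via full elementarity.

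The main obstacle is organizational rather than conceptual: one has to ensure that the factorization properly transfers the hypotheses to $V[G]$, so that the inductive hypothesis applies to $(V[G],\DD)$. Once the intermediate model $V[H]$ is seen to sit simultaneously as a fully $H_\cc$-elementary extension of $V$ and as an extension of $V[G]$ by a $\Gamma$-forcing forcing $\RA_n(\Gamma)$, the induction is driven entirely by the alternation between full elementarity along $V\to V[H]$ (produced at each stage by the definition of $\RA_{n+1}(\Gamma)$) and $n$-elementarity along $V[G]\to V[H]$ (produced by the inductive hypothesis).
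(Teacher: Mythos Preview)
Your proof is correct and follows essentially the same approach as the paper: both set up the triangle $H_\cc^V$, $H_\cc^{V[G]}$, $H_\cc^{V[G\ast H]}$ using $\RA_{n+1}(\Gamma)$ in $V$ to obtain the full elementarity along the outer edge, verify via the quotient factorization that the inductive hypothesis applies between $V[G]$ and $V[G\ast H]$, and then chase a $\Sigma_{n+1}$-formula around the triangle. The only minor difference is organizational: the paper also invokes the inductive hypothesis on the pair $(V,V[G])$ to handle the upward direction directly, whereas you route the upward direction through $V[H]$ via full elementarity and then reflect the $\Pi_n$-matrix back down using $H_\cc^{V[G]}\prec_n H_\cc^{V[H]}$ --- this saves one appeal to the inductive hypothesis but is otherwise the same argument.
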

	\begin{proof}
		We proceed by induction on $n$. Since $\gamma^+ \leq (\gamma^+)^{V^\BB}$, $H_{\gamma^+} \subseteq H_{\gamma^+}^\BB$ and the thesis holds for $n = 0$ by the fact that for all transitive structures $M$, $N$, if $M \subset N$ then $M \prec_0 N$.
		 Suppose now that $n > 0$, and fix $G$ $V$-generic for $\BB$. By Proposition \ref{prop:upliftingdef} and $\RA_n(\Gamma)$, let $\CC \in V[G]$ be such that whenever $H$ is $V[G]$-generic for $\CC$, $V[G \ast H] \models \RA_{n-1}(\Gamma)$ and $H_{\gamma^+}^V \prec H_{\gamma^+}^{V[G \ast H]}$. 
		 Hence we have the following diagram:
		\[
			\begin{tikzpicture}[xscale=1.5,yscale=-1.2]
				\node (A0_0) at (0, 0) {$H_{\gamma^+}^V$};
				\node (A0_2) at (2, 0) {$H_{\gamma^+}^{V[G \ast H]}$};
				\node (A1_1) at (1, 1) {$H_{\gamma^+}^{V[G]}$};
				\path (A0_0) edge [->]node [auto] {$\scriptstyle{\Sigma_\omega}$} (A0_2);
				\path (A1_1) edge [->]node [auto,swap] {$\scriptstyle{\Sigma_{n-1}}$} (A0_2);
				\path (A0_0) edge [->]node [auto,swap] {$\scriptstyle{\Sigma_{n-1}}$} (A1_1);
			\end{tikzpicture}
		\]
		obtained by inductive hypothesis applied both on $V$, $V[G]$ and on $V[G]$, $V[G \ast H]$ since in all those classes $\RA_{n-1}(\Gamma)$ holds.

		Let $\phi \equiv \exists x \psi(x)$ be any $\Sigma_{n}$ formula with parameters in $H_{{\gamma^+}}^V$. First suppose that $\phi$ holds in $V$, and fix $\bar{x} \in V$ such that $\psi(\bar{x})$ holds. Since $H_{\gamma^+}^V \prec_{n-1} H_{\gamma^+}^{V[G]}$ and $\psi$ is $\Pi_{n-1}$, it follows that $\psi(\bar{x})$ holds in $V[G]$ hence so does $\phi$.
		Now suppose that $\phi$ holds in $V[G]$ as witnessed by $\bar{x} \in V[G]$. Since $H_{\gamma^+}^{V[G]} \prec_{n-1} H_{\gamma^+}^{V[G \ast H]}$ it follows that $\psi(\bar{x})$ holds in $V[G \ast H]$, hence so does $\phi$. Since $H_{\gamma^+}^V \prec H_{\gamma^+}^{V[G \ast H]}$, the formula $\phi$ holds also in $V$ concluding the proof.
	\end{proof}

	\begin{corollary} \label{cor:absoluteness}
		If $\Gamma$ is well behaved, $\RA_\omega(\Gamma)$ holds, and $\BB \in \Gamma$ forces $\RA_\omega(\Gamma)$, then  $H_{\gamma^+} \prec H_{\gamma^+}^\BB$ (where $\gamma=\gamma_\Gamma$).
	\end{corollary}

	A generic absoluteness result for $L(\ON^{\omega_1})$ and $\SSP$ forcings preserving $\MM^{+++}$ 
	has been obtained by the second author in \cite{viale:category_forcing}. 
	Corollary~\ref{cor:absoluteness} for $\Gamma = \SSP$ provides a weaker statement than the ones obtained in 
	\cite{viale:category_forcing} for the $\SSP$ forcings 
	(since it concerns the smaller model $H_{\omega_2}$ rather than the whole $L(\ON^{\omega_1})$). 
	However ~\ref{cor:absoluteness} is a general result since it holds also for interesting choices of 
	$\Gamma \neq \SSP$. Moreover the consistency of $\RA_\omega(\Gamma)$ 
	follows from much weaker large cardinal assumptions than the one needed for $\MM^{+++}$. 
	Furthermore, the above result can be applied to classes $\Gamma$ with $\gamma_\Gamma>\omega_1$, 
	providing insights on how to get generic absoluteness for the theory of $H_{\gamma^+}$ 
	in case $\gamma^+ \geq \omega_3$.

	\section{Uplifting cardinals and definable Menas functions} \label{sec:uplifting}

	We introduce the $(\alpha)$-uplifting cardinals in order to obtain the consistency of the 
	$\alpha$-iterated resurrection axioms.
	\begin{definition} \label{def:uplifting}
		The \emph{uplifting game} $\GGG^{\UP}$ is as follows. Player I (\emph{challenge}) plays couples of ordinals $(\alpha_n, \theta_n)$ such that $\alpha_{n+1} < \alpha_n$. Player II (\emph{uplift}) plays inaccessible cardinals $\kappa_n$ such that $V_{\kappa_n+1} \prec_{\Delta^1_1} V_{\kappa_{n+1}+1}$ and\footnote{We recall that the notation $M \prec_{\Delta^1_1} N$ is explained in Section \ref{sec:class_games}.} $\kappa_n \geq \theta_n$. The last player who can move wins.
			\[
			\xymatrixcolsep{1pc}
			\xymatrix{
			\text{I (\emph{Challenge}).} &  (\alpha_0, \theta_0) \ar@{->}[rd] & & (\alpha_1, \theta_1) \ar@{->}[rd] & & (\alpha_2, \theta_2) \ar@{-->}[rd] & \\
			\text{II (\emph{Uplift}).\phantom{eeii}} & & \kappa_0 \ar@{->}[ru] & & \kappa_1 \ar@{->}[ru] & & \quad \quad \ldots
			}
			\]

		We say that $\kappa$ is $(\alpha)$-uplifting iff Player II (\emph{Uplift}) wins the uplifting game after $\ap{(\alpha,0), \kappa}$. We say that $\kappa$ is $(\ON)$-uplifting iff it is $(\alpha)$-uplifting for all $\alpha \in \ON$.
	\end{definition}

	The uplifting game is a clopen class game, since Player I (\emph{Challenge}) 
	plays a descending sequence of ordinals. It follows that upliftingness is a $\Delta^1_1$-property under $\ADF$ (see Proposition \ref{prop:winningdelta}). 
	As for the iterated resurrection axioms, we can give a formulation of $(\alpha)$-upliftingness in recursive terms.

	\begin{proposition}[$\ADF$] \label{prop:upliftingdef}
		$\kappa$ is $(\alpha)$-uplifting iff it is inaccessible and for all $\beta < \alpha$ and $\theta > \kappa$ there is a $\lambda > \theta$ that is $(\beta)$-uplifting and $V_{\kappa+1} \prec_{\Delta^1_1} V_{\lambda+1}$.
	\end{proposition}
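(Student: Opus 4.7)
The plan is to mirror closely the structure of the proof of Proposition~\ref{prop:radef}, since the uplifting game has exactly the same combinatorial shape as the resurrection games: Player I plays a strictly decreasing sequence of ordinals in the first coordinate, so the game is clopen and $\ADF$ guarantees determinacy from every position.

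First I would set up the basic strategy-restriction lemma: whenever $\sigma$ is a winning strategy for one of the players in $\GGG^{\UP}$ after some initial segment $s$ ending with Player II's move $\kappa'$ (following a Player I move of the form $(\beta,\theta)$), the set
\[
\sigma_s = \bp{\ap{(\beta,0),\kappa'}^\smallfrown u : ~ s^\smallfrown u \in \sigma}
\]
is a winning strategy for the same player in $\GGG^{\UP}$ after $\ap{(\beta,0),\kappa'}$. This works because the legality conditions on subsequent moves are purely local: Player I only needs $\alpha_{n+1}<\alpha_n$, and Player II only needs $V_{\kappa_n+1}\prec_{\Delta^2_1}V_{\kappa_{n+1}+1}$ together with $\kappa_{n+1}\geq\theta_{n+1}$, both of which reference only the immediately preceding move (which is the same $\beta$, resp.\ $\kappa'$, in the two games).

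Then for the forward direction, given a winning strategy $\sigma$ for Player II (\emph{Uplift}) after $\ap{(\alpha,0),\kappa}$, regularity of $\kappa$ is forced by the rules of the game, and for any $\beta<\alpha$ and $\theta>\kappa$ I let Player I play $(\beta,\theta)$: the unique $\sigma$-response is some $\lambda\geq\theta$ with $V_{\kappa+1}\prec_{\Delta^2_1}V_{\lambda+1}$, and the restriction lemma immediately yields that $\lambda$ is $(\beta)$-uplifting.

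For the converse direction I would argue by contradiction exactly as in Proposition~\ref{prop:radef}: assume the recursive condition holds but $\kappa$ is not $(\alpha)$-uplifting; by $\ADF$ Player I (\emph{Challenge}) has a winning strategy $\sigma$ after $\ap{(\alpha,0),\kappa}$, determining a unique first challenge $(\beta,\theta)$. Apply the recursive hypothesis with a parameter slightly enlarged (taking some $\theta'>\max(\theta,\kappa)$ if needed) to produce $\lambda>\theta$ that is $(\beta)$-uplifting and with $V_{\kappa+1}\prec_{\Delta^2_1}V_{\lambda+1}$. Then $\lambda$ is a legal Player II response, and the restriction lemma converts the remainder of $\sigma$ into a winning strategy for Player I (\emph{Challenge}) in $\GGG^{\UP}$ after $\ap{(\beta,0),\lambda}$, contradicting that $\lambda$ is $(\beta)$-uplifting.

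The only real subtlety — and the step I would write out most carefully — is the restriction lemma itself: one must verify that truncating the first two rounds of a play produces a genuine play of the smaller game, and in particular that the sole constraint linking Player II's second-round move to its first-round move (namely $V_{\kappa+1}\prec_{\Delta^2_1}V_{\lambda+1}$) does not disappear when we forget the first round, because in the smaller game it is replaced by the condition $V_{\lambda+1}\prec_{\Delta^2_1}V_{\kappa_2+1}$ on Player II's next move, which, together with $\prec_{\Delta^2_1}$-transitivity (Proposition~\ref{prop:delta21abs}), is automatically compatible with the old chain. Once this bookkeeping is done the induction is essentially immediate.
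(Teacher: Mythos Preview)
Your proposal is correct and follows essentially the same route as the paper: the paper's proof also defines the restriction $\sigma_s = \bp{\ap{(\beta,0),\lambda}^\smallfrown u : s^\smallfrown u \in \sigma}$ for $s = \ap{(\alpha,0),\kappa,(\beta,\theta),\lambda}$ and then says to follow step by step the last part of the proof of Proposition~\ref{prop:radef}, which is exactly your forward-and-contrapositive argument. The only cosmetic point is that the game rules give $\lambda \geq \theta$ rather than $\lambda > \theta$, so in the forward direction you should have Player~I challenge with $(\beta,\theta+1)$ (you already handle the analogous adjustment in the converse direction).
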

	\begin{proof}
		Similarly as in Proposition \ref{prop:radef}. Let $\sigma$ be a winning strategy for Player I or II in $\GGG^{\UP}$, $s = \ap{(\alpha,0), \kappa, (\beta,\theta), \lambda}$ be in $\sigma$. Then $\sigma_s = \bp{\ap{(\beta,0),\lambda}^\smallfrown u: ~ s^\smallfrown u \in \sigma}$ is a winning strategy for the same player in $\GGG^{\UP} \res \ap{(\beta,0),\lambda}$. Then we can follow step by step the last part of the proof of Proposition \ref{prop:radef} using the reduction $\sigma \to \sigma_s$ defined above.
	\end{proof}

	We remark that the definition of $(0)$-uplifting cardinal coincides with that of inaccessible cardinal,
	while replacing ``inaccessible'' with ``regular'' in Definition \ref{def:uplifting} makes 
	no difference for $\alpha > 0$, since a successor cardinal $\kappa$ cannot satisfy 
	$V_{\kappa+1} \prec_{\Delta^1_1} V_{\lambda+1}$ for any $\lambda > \kappa$. 

	Although similar, the notion of $(1)$-uplifting cardinal is stronger than the notion
	of \emph{uplifting cardinal} in Johnstone and Hamkins \cite{hamkins:resurrection_uplifting}.
	However, consistency-wise $(\ON)$-uplifting cardinals are very close to
	Johnstone and Hamkins uplifting cardinals, as shown in 
	Proposition \ref{pr:upliftingmahlo} below.	
	
	The key reason which led us to introduce $(\alpha)$-uplifting cardinals as a natural (second-order) 
	strengthening of 
	the Hamkins and Johnstone original (first-order) notion of upliftingness is to be found in 
	Lemma~\ref{lm:upliftingrelativization} below which states a nice reflection property of 
	$(\alpha)$-uplifting cardinals which we cannot predicate for the natural recursive strengthenings of 
	Hamkins and Johnstone notion of upliftingness.
	These reflection properties are a key ingredient in our proof of the consistency of the iterated resurrection 
	axioms. We will come back to these issues in more details in Section~\ref{sec:consistency}.


\subsection{Consistency strength of $(\alpha)$-uplifting cardinals}
	
	\begin{lemma} \label{lem:statdelta21}
		Assume $\delta$ is Mahlo. Then there are stationarily many inaccessible 
		$\kappa < \delta$ such that $V_{\kappa+1} \prec_{\Delta^1_1} V_{\delta+1}$.
	\end{lemma}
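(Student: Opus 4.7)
The plan is to use a two-sorted L\"owenheim--Skolem construction together with the Mahlo property to produce stationarily many inaccessible $\gamma<\delta$ arising as the ``set part'' of some elementary substructure $N_\gamma \prec V_{\delta+1}$, and then to show that any such $\gamma$ automatically satisfies $V_{\gamma+1} \prec_{\Delta^2_1} V_{\delta+1}$.

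Working in $\MK$, global choice provides a well-ordering of $V_{\delta+1}$ and hence Skolem functions for the two-sorted language $\LLL^2$. For each $\beta < \delta$, let $N_\beta = \SkH(V_\beta)$ be the $\LLL^2$-Skolem hull of $V_\beta$ in $V_{\delta+1}$; since $\delta$ is inaccessible and Skolem hulls are finitary, $\vp{\set(N_\beta)}, \vp{\class(N_\beta)} < \delta$. Set $D = \bp{\beta < \delta : \set(N_\beta) = V_\beta}$. I would show $D$ is a club in $\delta$: closure under limits is immediate from the continuity of $\SkH$ in its parameters (each Skolem term uses only finitely many arguments), and unboundedness follows by the standard iteration, setting $\beta_{n+1}$ to be the least ordinal with $\set(N_{\beta_n}) \subseteq V_{\beta_{n+1}}$ and taking $\sup_n \beta_n$; regularity of $\delta$ keeps this supremum below $\delta$. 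Given any club $E \subseteq \delta$, Mahloness of $\delta$ then yields an inaccessible $\gamma \in D \cap E$, and in fact stationarily many such $\gamma$.

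Fix such a $\gamma$: then $N_\gamma \prec V_{\delta+1}$ in the full $\LLL^2$ sense and $\set(N_\gamma) = V_\gamma$. To deduce $V_{\gamma+1} \prec_{\Delta^2_1} V_{\delta+1}$ it suffices to prove the ``downward'' direction for every $\Sigma^2_1$ formula $\Phi$ with parameters in $V_\gamma$, since the other direction for a $\Delta^2_1$ formula $\Phi$ follows from the downward direction applied to $\neg\Phi$, which is again $\Delta^2_1$ (and in particular $\Sigma^2_1$) whenever $\Phi$ is. So fix $\Phi(\vec{x}) \equiv \exists C\, \phi_1(C, \vec{x})$ with $\phi_1 \in \Pi^2_0$, and $\vec{a} \in V_\gamma$. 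If $V_{\delta+1} \models \Phi(\vec{a})$, elementarity of $N_\gamma$ yields $N_\gamma \models \Phi(\vec{a})$, hence some $C \in \class(N_\gamma)$ witnesses $N_\gamma \models \phi_1(C, \vec{a})$. Since $\phi_1$ is first-order in the language $\bp{\in, \dot C}$ and $\set(N_\gamma) = V_\gamma$, evaluating $\phi_1(C, \vec{a})$ in $N_\gamma$ is the same as evaluating the first-order formula $\phi_1$ in $\ap{V_\gamma, \in, C \cap V_\gamma}$, which in turn is precisely $V_{\gamma+1} \models \phi_1(C \cap V_\gamma, \vec{a})$; therefore $V_{\gamma+1} \models \Phi(\vec{a})$.

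The main technical obstacle is verifying that $D$ is a club, since Skolem functions in $\LLL^2$ may take class parameters and produce sets, so in principle the set part of $N_\beta$ could be pushed above $V_\beta$; this is absorbed by the finitary character of each Skolem term combined with a long-enough iteration at limits. Once $D$ is a club the remainder is formal: the key simplification is the $\neg\Phi$-trick, which reduces bidirectional $\Delta^2_1$-elementarity to a single $\Sigma^2_1$-downward reflection, and that reflection is immediate from two-sorted elementarity of $N_\gamma \prec V_{\delta+1}$ together with the natural identification of any class $C \in \class(N_\gamma)$ with its restriction $C \cap V_\gamma \in V_{\gamma+1}$.
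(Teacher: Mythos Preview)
Your proposal is correct and follows essentially the same approach as the paper: build an elementary chain of Skolem hulls in $V_{\delta+1}$ to produce a club of $\gamma<\delta$ with $\set(N_\gamma)=V_\gamma$, intersect with the given club and apply Mahloness to find an inaccessible such $\gamma$, then deduce $V_{\gamma+1}\prec_{\Delta^2_1}V_{\delta+1}$ from $N_\gamma\prec V_{\delta+1}$. The only cosmetic differences are that the paper threads the arbitrary club $C$ into the hull (so that $\gamma$ is automatically a limit point of $C$) rather than intersecting afterward, and that the paper cites Corollary~\ref{cor:delta21abs} for the final step whereas you unwind that corollary inline via the restriction $C\mapsto C\cap V_\gamma$.
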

	\begin{proof}
		Let $C \subseteq \delta$ be a club, and let $M_0$ be the first-order Skolem hull of $\bp{C}$ in $V_{\delta+1}$.
		Define a sequence $\ap{(M_\alpha, \kappa_\alpha): ~ \alpha < \delta}$ where $\kappa_\alpha = \max(\alpha, \rank(M_\alpha \cap V_\delta))$; $M_\alpha = \bigcup_{\beta < \alpha} M_\beta$ for  limit ordinals $\alpha$; and $M_{\alpha+1} \prec V_{\delta+1}$ is obtained applying the Lowenheim-Skolem Theorem to $M_\alpha \cup V_{\kappa_\alpha}$, so that $M_{\alpha+1} \supseteq M_\alpha \cup V_{\kappa_\alpha}$ and $\vp{M_{\alpha+1}} = \vp{M_\alpha \cup V_{\kappa_\alpha}}$.

		Since $\vp{M_\alpha} < \delta$ implies that $\kappa_\alpha < \delta$ and $\vp{M_{\alpha+1}} = \vp{M_\alpha \cup V_{\kappa_\alpha}} < \delta$, by induction on $\alpha$ we have that $\vp{M_\alpha}, \kappa_\alpha < \delta$ for all $\alpha$.
		Furthermore, the sequence $\ap{\kappa_\alpha: ~ \alpha <\delta}$ is a club on $\delta$, which is Mahlo, thus we can find an $\bar{\alpha} < \delta$ limit such that $\bar{\alpha}=\kappa_{\bar{\alpha}}$ is inaccessible.

		Since $\bar{\alpha}$ is limit, $M_{\bar{\alpha}} \cap V_\delta = V_{\kappa_{\bar\alpha}}$. Since $V_{\delta+1} \models ``C \text{ is a club''}$ and $C \in M_{\bar{\alpha}}$, $M_{\bar{\alpha}} \prec V_{\delta+1}$, we get that $M_{\bar{\alpha}} \models ``C \text{ is a club''}$ hence $\kappa_{\bar{\alpha}}$ is a limit point of $C$. Thus, $\kappa_{\bar{\alpha}}$ is an inaccessible cardinal in $C$ and by Proposition \ref{prop:delta21abs} $V_{\kappa_{\bar{\alpha}}+1} \equiv_{\Delta^1_1(T)} M_{\bar{\alpha}} \prec V_{\delta+1}$, concluding the proof.
	\end{proof}

	\begin{proposition} \label{pr:upliftingmahlo}
		Assume $\delta$ is Mahlo. Then $V_{\delta+1}$ models $\MK+$ there are class-many $(\ON)$-uplifting cardinals.
	\end{proposition}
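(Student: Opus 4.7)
The plan is to combine Lemma \ref{lem:statdelta21} with a transfinite induction on $\alpha$ using the recursive characterization of Proposition \ref{prop:upliftingdef}. Since $\delta$ is Mahlo it is in particular inaccessible, so $\ap{V_\delta,V_{\delta+1}}$ is a standard model of $\MK$ (and hence of $\ADF$ by Proposition \ref{prop:mkadf}). This takes care of the $\MK$ clause and reduces the task to exhibiting class many $(\ON)$-uplifting cardinals inside $V_{\delta+1}$.

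I would define, externally,
\[
S = \bp{\gamma < \delta : \gamma \text{ inaccessible and } V_{\gamma+1} \prec_{\Delta^2_1} V_{\delta+1}}.
\]
By Lemma \ref{lem:statdelta21}, $S$ is stationary in $\delta$, hence unbounded in $\delta = \ON^{V_{\delta+1}}$, and thus forms a proper class from the viewpoint of $V_{\delta+1}$. Consequently it suffices to prove that every $\gamma \in S$ is $(\alpha)$-uplifting in $V_{\delta+1}$ for every $\alpha < \delta$.

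For this I would argue by transfinite induction on $\alpha$, invoking the recursive formulation of Proposition \ref{prop:upliftingdef}. The base case $\alpha = 0$ is trivial since inaccessibility implies regularity. For the inductive step, fix $\gamma \in S$, $\beta < \alpha$ and $\theta$ with $\gamma < \theta < \delta$; pick some $\lambda \in S$ with $\lambda > \theta$ (using unboundedness of $S$). The induction hypothesis gives that $\lambda$ is $(\beta)$-uplifting in $V_{\delta+1}$, and for any $\Delta^2_1$ formula $\phi(\vec{p})$ with parameters $\vec{p} \in V_\gamma \subseteq V_\lambda$ both $V_{\gamma+1} \models \phi(\vec{p})$ and $V_{\lambda+1} \models \phi(\vec{p})$ are equivalent to $V_{\delta+1} \models \phi(\vec{p})$ by virtue of $\gamma,\lambda \in S$, so $V_{\gamma+1} \prec_{\Delta^2_1} V_{\lambda+1}$. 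The recursive characterization then yields $(\alpha)$-upliftingness of $\gamma$, closing the induction.

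The only delicate point is the "transitivity" of $\Delta^2_1$-elementarity invoked at the inductive step: it relies on the fact that $V_{\delta+1}$ acts as a common $\Delta^2_1$-extension of $V_{\gamma+1}$ and $V_{\lambda+1}$, so that the reflection supplied by Lemma \ref{lem:statdelta21} is strong enough to feed back into the recursion defining $(\alpha)$-upliftingness. Beyond this, the argument is a routine induction.
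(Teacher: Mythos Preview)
Your proposal is correct and follows essentially the same argument as the paper: define the stationary set $S$ via Lemma~\ref{lem:statdelta21}, then induct on $\alpha<\delta$ using Proposition~\ref{prop:upliftingdef}, unboundedness of $S$, and the transitivity of $\prec_{\Delta^2_1}$ through the common extension $V_{\delta+1}$. The only cosmetic addition is your explicit mention of $\ADF$ via Proposition~\ref{prop:mkadf}, which the paper leaves implicit.
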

	\begin{proof}
		Since $\delta$ is inaccessible, $V_{\delta+1}$ models $\MK$ and hence $\AD(\Delta^0_1)$. 
		Furthermore,
		\[
		S = \bp{\kappa < \delta: ~  \kappa \text{ inaccessible} ~\wedge~ V_{\kappa+1} \prec_{\Delta^1_1} V_{\delta+1}}
		\]
		is stationary by Lemma \ref{lem:statdelta21}. We prove that every element of $S$ is $(\alpha)$-uplifting in $V_{\delta+1}$ by induction on $\alpha < \delta$.

		First, every element of $S$ is $(0)$-uplifting by definition. Suppose now that every element of $S$ is $(\beta)$-uplifting for every $\beta < \alpha$, and let $\kappa$ be in $S$. Since $S$ is unbounded, for every $\beta < \alpha$, $\theta > \kappa$ in $V_\delta$ there is a $\lambda \in S$, $\lambda > \theta$.
		Such cardinal $\lambda$ is $(\beta)$-uplifting by inductive hypothesis and $V_{\kappa+1}, V_{\lambda+1} \prec_{\Delta^1_1} V_{\delta+1}$ implies that $V_{\kappa+1} \prec_{\Delta^1_1} V_{\lambda+1}$. Since $V_{\delta+1} \models \ADF$ we can use Proposition \ref{prop:upliftingdef} to conclude.
	\end{proof}
	
	As shown in \cite[Thm. 11]{hamkins:resurrection_uplifting}, if there is an uplifting cardinal, 
	there is a transitive model of $\ZFC + ``\ON \text{ is Mahlo''}$. So the existence of an $(\ON)$-uplifting cardinal is in consistency strength strictly between the existence of a Mahlo cardinal and the scheme ``$\ON \text{ is Mahlo}$''. We take these bounds to be rather close together and low in the large cardinal hierarchy.


\subsection{Reflection properties of $(\alpha)$-uplifting cardinals}		

The following proposition outlines a key reflection property of $(\alpha)$-uplifting cardinals.
	
	\begin{lemma}[$\ADF$] \label{lm:upliftingrelativization}
		Let $\kappa$ be an $(\alpha)$-uplifting cardinal with $\alpha < \kappa$, and let $\delta < \kappa$. Then $\cp{\delta \text{ is }(\alpha)\text{-uplifting}}^{V_{\kappa+1}}$ iff it is $(\alpha)$-uplifting.
	\end{lemma}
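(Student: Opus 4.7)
The plan is to argue by induction on $\alpha$. The base case $\alpha=0$ reduces to absoluteness of regularity, which is immediate for $\delta<\kappa$ since any cofinal function $f:\gamma\to\delta$ with $\gamma<\delta<\kappa$ lies in $V_\kappa$. For the inductive step I will combine the recursive characterisation from Proposition~\ref{prop:upliftingdef} with the fact noted after Definition~\ref{def:uplifting} that "$\lambda$ is $(\beta)$-uplifting" is a $\Delta^2_1$-property, and the auxiliary observation that "$V_{\delta+1}\prec_{\Delta^2_1}V_{\lambda+1}$" is first-order expressible in the parameters $\delta,\lambda$ (a syntactic universal quantifier over $\Delta^2_1$-formulae combined with set-quantification over subsets of $V_{\lambda+1}$). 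In particular, for each $\beta$ the formula $\phi(\delta,\theta,\beta)\equiv\exists\lambda\,\cp{\lambda>\theta\wedge\lambda\text{ is }(\beta)\text{-uplifting}\wedge V_{\delta+1}\prec_{\Delta^2_1}V_{\lambda+1}}$ is itself $\Delta^2_1$ in its set parameters, since the $\Delta^2_1$ class is closed under first-order set quantifiers.

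For the direction asserting that the $V_\kappa$-version implies the $V$-version, fix $\beta<\alpha$ and $\theta\in\ON$ and seek a witness $\lambda$ to the recursive condition in $V$. Using the $(\alpha)$-upliftingness of $\kappa$, pick $\mu>\max(\kappa,\theta)$ that is $(\beta)$-uplifting in $V$ with $V_{\kappa+1}\prec_{\Delta^2_1}V_{\mu+1}$. Since "$\delta$ is $(\alpha)$-uplifting" is $\Delta^2_1$ with parameters in $V_\kappa$, elementarity transfers the $V_\kappa$-assumption to $\cp{\delta\text{ is }(\alpha)\text{-uplifting}}^{V_\mu}$. Applying the recursive characterisation inside $V_\mu$ yields $\lambda<\mu$ with $\lambda>\theta$, $\cp{\lambda\text{ is }(\beta)\text{-uplifting}}^{V_\mu}$, and $V_{\delta+1}\prec_{\Delta^2_1}V_{\lambda+1}$. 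The inductive hypothesis at host $\mu$ (valid because $\mu$ is $(\beta)$-uplifting and $\beta<\alpha<\mu$) then upgrades this to $\lambda$ being $(\beta)$-uplifting in $V$, supplying the required witness.

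For the converse direction, given $\beta<\alpha$ and $\theta<\kappa$, take any $V$-witness $\lambda^*$ to $\phi(\delta,\theta,\beta)$ (provided by $\delta$ being $(\alpha)$-uplifting in $V$), and use $(\alpha)$-upliftingness of $\kappa$ to choose $\mu>\max(\lambda^*,\kappa)$ that is $(\beta)$-uplifting in $V$ with $V_{\kappa+1}\prec_{\Delta^2_1}V_{\mu+1}$. By the inductive hypothesis at host $\mu$, the $(\beta)$-upliftingness of $\lambda^*$ agrees when computed in $V_\mu$ or in $V$, so $V_{\mu+1}\models\phi(\delta,\theta,\beta)$ via $\lambda^*$. $\Delta^2_1$-elementarity then transfers $\phi$ to $V_{\kappa+1}$, yielding a witness $\lambda<\kappa$ satisfying the recursive condition for $\delta$'s $(\alpha)$-upliftingness inside $V_\kappa$. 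The main delicate point will be the $\Delta^2_1$-bookkeeping: verifying that elementarity between set-sized $\NBG$-structures is genuinely first-order, that set quantifiers preserve the $\Delta^2_1$ class, and that each application of the inductive hypothesis uses a host of the correct upliftingness order, which is arranged by extracting $\mu$ from the recursive form of $(\alpha)$-upliftingness of $\kappa$.
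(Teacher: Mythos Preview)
Your proof is correct and follows essentially the same approach as the paper: induction on $\alpha$, with both directions handled by passing via $\kappa$'s $(\alpha)$-upliftingness to a larger $(\beta)$-uplifting host $\mu$ (the paper's $\lambda$), transferring the relevant statement by $\Delta^2_1$-elementarity, and invoking the inductive hypothesis at that host to reconcile $(\beta)$-upliftingness computed in $V_\mu$ with $(\beta)$-upliftingness in $V$. The only difference is notation and your somewhat more explicit $\Delta^2_1$ bookkeeping; note though that closure of $\Delta^2_1$ under set quantifiers is not a general fact---it holds here because upliftingness is expressed via a clopen class game and prepending a set-valued move yields another clopen class game, a point the paper also leaves implicit.
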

	\begin{proof}
		Let $\phi(\alpha)$ be the statement of this theorem, i.e:
		\[
		\forall \kappa > \alpha ~ (\alpha)\text{-uplifting} ~ \forall \delta < \kappa ~ \cp{\cp{\delta \text{ is }(\alpha)\text{-uplifting}}^{V_{\kappa+1}} \Leftrightarrow \delta \text{ is }(\alpha)\text{-uplifting}}
		\]
		We prove $\phi(\alpha)$ by induction on $\alpha$ using the recursive formulation given by Proposition \ref{prop:upliftingdef}. For $\alpha = 0$ it is easily verified, suppose now that $\alpha > 0$.

		For the forward direction, suppose that $\cp{\delta \text{ is }(\alpha)\text{-uplifting}}^{V_{\kappa+1}}$, 
		and let $\beta < \alpha$, $\theta > \delta$ be ordinals. Let $\lambda > \theta$ be a $(\beta)$-uplifting 
		cardinal with $V_{\kappa+1} \prec_{\Delta^1_1} V_{\lambda+1}$, so that 
		$\cp{\delta \text{ is }(\alpha) \text{-uplifting}}^{V_{\lambda+1}}$ since $(\alpha)$-upliftingness is a 
		$\Delta^1_1$-property under $\ADF$ (and $\ADF$ holds at inaccessible cardinals).
		Then there is a $\nu > \theta$ in $V_{\lambda+1}$ with $V_{\delta+1} \prec_{\Delta^1_1} V_{\nu+1}$ 
		and $\cp{\nu \text{ is }(\beta)\text{-uplifting}}^{V_{\lambda+1}}$. 
		By inductive hypothesis, since $\beta<\alpha$ and $\lambda$ is $(\beta)$-uplifting in $V$, 
		also $\nu$ is $(\beta)$-uplifting in $V$.

		Conversely, suppose that $\delta$ is $(\alpha)$-uplifting in $V$ and let $\beta < \alpha$, $\theta > \delta$ be ordinals in $V_\kappa$. Let $\nu > \theta$ be a $(\beta)$-uplifting cardinal such that $V_{\delta+1} \prec_{\Delta^1_1} V_{\nu+1}$, and let $\lambda > \nu$ be a $(\beta)$-uplifting cardinal such that $V_{\kappa+1} \prec_{\Delta^1_1} V_{\lambda+1}$. By inductive hypothesis, since $\beta < \alpha$ and $\nu$, $\lambda$ are $(\beta)$-uplifting in $V$, $\cp{\nu \text{ is }(\beta)\text{-uplifting}}^{V_{\lambda+1}}$ thus
		\[
		V_{\lambda+1} \models \exists \nu > \theta ~ V_{\delta+1} \prec_{\Delta^1_1} V_{\nu+1} \wedge \nu \text{ is }(\beta)\text{-uplifting}.
		\]
		Remark that upliftingness can be expressed by a canonical $\Delta^1_1$-formula (see Proposition \ref{prop:winningdelta}). Remark also that the above formula is expressed through conjunctions and set quantifications over the formula defining upliftingness and other first-order statements (such as $V_{\delta+1} \prec_{\Delta^1_1} V_{\nu+1}$). Hence the above formula is as well canonical $\Delta^1_1$ (see Proposition \ref{prop:canonicalformula}). By $\Delta^1_1$-elementarity, $V_{\kappa+1}$ models the same formula thus concluding the proof.
	\end{proof}


\subsection{Menas functions for uplifting cardinals} \label{sec:menas}

	To obtain the consistency of the $(\alpha)$-iterated resurrection axioms
	 we will use a lottery iteration relative to a fast-growing function $f: \kappa \to \kappa$ for a sufficiently large cardinal $\kappa$. The exact notion of fast-growth we will need is given by the \emph{Menas} property schema introduced in \cite{menas:strong_compactness} and developed by Hamkins for several different cardinal notions in \cite{hamkins:lottery_preparation,hamkins:resurrection_uplifting}.

	We remark that it is always possible to define Menas functions for cardinals that have a Laver function, while it is also possible to define such functions for some cardinals that don't have a Laver function. Moreover, from Menas functions we can obtain many of the interesting consequences given by Laver functions.

	\begin{definition} \label{def:menas}
		The \emph{Menas uplifting game} $\GGG^{M-\UP}$ is as follows. Player I (\emph{Challenge}) plays couples of ordinals $(\alpha_n, \theta_n)$ such that $\alpha_{n+1} < \alpha_n$. Player II (\emph{Uplift}) plays partial functions $f_n: \kappa_n \to \kappa_n$ with $\kappa_n$ inaccessible such that%
		\footnote{$\ap{M, C} \prec_{\Delta^1_1} \ap{N,D}$ iff $\set(M)\subseteq \set(N)$ and for all $\Delta^1_1$-properties $\phi(\vec{x},Y)$ with $\vec{x}$ a tuple of set variables and $Y$ a class variable, $\ap{M,C}\models\phi(\vec{a},C)$ iff $\ap{N,D}\models\phi(\vec{a},D)$.}
		$\ap{V_{\kappa_n+1}, f_n} \prec_{\Delta^1_1} \ap{V_{\kappa_{n+1}+1}, f_{n+1}}$ and $f_{n+1}(\kappa_n) \geq \theta_{n+1}$. The last player who can move wins.
			\[
			\xymatrixcolsep{1pc}
			\xymatrix{
			\text{I (\emph{Challenge}).} &  (\alpha_0, \theta_0) \ar@{->}[rd] & & (\alpha_1, \theta_1) \ar@{->}[rd] & & (\alpha_2, \theta_2) \ar@{-->}[rd] & \\
			\text{II (\emph{Uplift}).\phantom{eeii}} & & f_0 \ar@{->}[ru] & & f_1 \ar@{->}[ru] & & \quad \quad \ldots
			}
			\]

		We say that a partial function $f: \kappa \to \kappa$ is Menas for $(\alpha)$-uplifting iff Player II (\emph{Uplift}) wins the Menas uplifting game after $\ap{(\alpha,0), f}$. We say that $f$ is Menas for $(\ON)$-uplifting iff it is Menas for $(\alpha)$-uplifting for all $\alpha \in \ON$.
	\end{definition}

	\begin{proposition}[$\ADF$]
		A partial function $f : \kappa \to \kappa$ with $\kappa$ inaccessible is Menas for $(\alpha)$-uplifting iff for all $\beta < \alpha$ and $\theta > \kappa$ there is a Menas for $(\beta)$-uplifting function $g: \lambda \to \lambda$ with $g(\kappa) > \theta$ and $\ap{V_{\kappa+1},f} \prec_{\Delta^1_1} \ap{V_{\lambda+1},g}$.
	\end{proposition}
	\begin{proof}
		Similarly as in Proposition \ref{prop:upliftingdef}.
	\end{proof}

	We can now prove the existence of definable Menas functions for $(\alpha)$-uplifting cardinals.
	
	\begin{proposition}[$\ADF$] \label{lm:menas}
		If $\kappa$ is $(\alpha)$-uplifting, then there is a 
		canonically $\Delta^1_1$-definable\footnote{More precisely, there is a canonically $\Delta_1^1$-formula $\phi(X)$ such that $V_{\kappa+1}\models\phi(f) \wedge \exists! X \phi(X)$.} Menas function for $(\alpha)$-uplifting on $\kappa$.
	\end{proposition}
	\begin{proof}
		We prove by induction on $\alpha$ that whenever $\kappa$ is $(\alpha)$-uplifting, such a function is given by relativizing to $V_{\kappa+1}$ the following definable class function (failure of upliftingness function):
		\[
		\begin{split}
		f(\xi) = \sup&\left\{\nu: ~ V_{\xi+1} \prec_{\Delta^1_1} V_{\nu+1} ~~\wedge \right. \\
		&\quad \left. \exists \beta ~ \nu \text{ is }(\beta)\text{-uplifting} ~\wedge~ \xi \text{ is not }(\beta+1)\text{-uplifting}\right\}.
		\end{split}
		\]
		Note that $f(\xi)$ is undefined only if $\xi$ is $(\ON)$-uplifting (in the domain of $f$), since otherwise $\xi$ would be an element considered in the supremum. Thus $f(\xi) \geq \xi$ when it is defined.
		If $\kappa$ is $(0)$-uplifting any function $g: \kappa \to \kappa$ is Menas, in particular our $f$. Now suppose that $\kappa$ is $(\alpha)$-uplifting with $\alpha > 0$ and let $\beta < \alpha$, $\theta > \kappa$ be ordinals. Let $\nu > \theta$ be a $(\beta)$-uplifting cardinal such that $V_{\kappa+1} \prec_{\Delta^1_1} V_{\nu+1}$, and let $\lambda$ be the least $(\beta)$-uplifting cardinal bigger than $\nu$ such that $V_{\kappa+1} \prec_{\Delta^1_1} V_{\lambda+1}$.

		Thus no $\nu' \in (\nu,\lambda)$ with $V_{\kappa+1} \prec_{\Delta^1_1} V_{\nu'+1}$ can be $(\beta)$-uplifting in $V$, hence by Lemma~\ref{lm:upliftingrelativization} neither in $V_\lambda$. It follows that $\kappa$ cannot be $(\beta+1)$-uplifting in $V_{\lambda}$, while again by Lemma \ref{lm:upliftingrelativization} it is $(\beta)$-uplifting in $V_{\lambda}$. Then any $\nu'$ considered in calculating $f^{V_{\lambda+1}}(\kappa)$ must be witnessed by a 
		$\beta' \geq \beta$ (since $\kappa$ is $(\beta'+1)$-uplifting in $V_{\lambda+1}$ for any $\beta'<\beta$) hence must be $(\beta)$-uplifting. It follows that $f^{V_{\lambda+1}}(\kappa) = \nu$.
		
		Since the graph of $f$ is defined through conjunctions and set quantifications over upliftingness (which is a canonical $\Delta^1_1$-property), it is as well canonical $\Delta^1_1$ (see Proposition \ref{prop:canonicalformula}) hence $\ap{V_{\kappa+1},f^{V_{\kappa+1}}} \prec_{\Delta^1_1} \ap{V_{\lambda+1},f^{V_{\lambda+1}}}$ concluding the proof.
	\end{proof}

	Note that in the proof of this Lemma we used in key steps the reflection properties of $(\alpha)$-uplifting cardinals given by Lemma~\ref{lm:upliftingrelativization}.

	\section{Consistency strength} \label{sec:consistency}

	The results of this section expand on the ones already present in~\cite{hamkins:resurrection_uplifting} and~\cite{viale:category_forcing}. In the previous section we outlined the large cardinal properties needed for our consistency proofs. Now we apply the machinery developed by Hamkins and Johnstone in their proof of the consistency of $\RA(\Gamma)$ for various classes of $\Gamma$, and show that with minor adjustments their techniques will yield the desired consistency results for $\RA_\alpha(\Gamma)$ for a weakly iterable well behaved $\Gamma$, when applied to lottery preparation forcings guided by suitable Menas functions. For this reason we feel free to sketch some of the proofs leaving to the reader to check the details, which follow closely what is done in~\cite{hamkins:resurrection_uplifting}.
	
	For the classes $\Gamma$ such that $\gamma^+ = \cc$ (which is the case for all the classes $\Gamma$ mentioned in this paper with $\gamma_\Gamma=\omega_1$) 
	$\RA_\alpha(\Gamma) \Rightarrow \RA(\Gamma)$. A lower bound for the latter is given by the axiom scheme ``$\ON$ is Mahlo'' \cite{hamkins:resurrection_uplifting}, hence the same lower bound can be predicated for $\RA_\alpha(\Gamma)$ as well for all these classes. This lower bound is rather close to the upper 
	bound we will obtain below for all relevant $\Gamma \neq \SSP$ (the existence of a Mahlo cardinal). 
	With some technical twists the same lower bounds can be inferred also for all the other classes $\Gamma$ for which $\gamma_\Gamma\neq\omega_1$, but we decided to skip the details and concentrate instead on the consistency proofs which are more delicate to handle. 
The reader interested in these details is referred to \cite{a:thesis}.


\subsection{Upper bounds}

	Let $\Gamma$ be a weakly iterable and well behaved class of forcing notions. 
	In this section we prove that $\PP^{\Gamma,f}_\kappa$, the lottery iteration of $\Gamma$ relative to a function 
	$f: \kappa \to \kappa$, forces $\RA_\alpha(\Gamma)$ whenever $f$ is Menas for $(\alpha)$-uplifting. 
	In order to prove this result, we will need to ensure that $\PP^{\Gamma,f}_\kappa$ ``behaves well'' 
	as a class forcing with respect to $V_{\kappa+1}$.

	There are two possible approaches. In the first one, we can consider $\ap{V_\kappa,f}$ as a $\ZFC$ model extended with an additional unary predicate for $f$, so that $\PP^{\Gamma,f}_\kappa$ can be handled as a definable class forcing in $V_\kappa$. Thus we can proceed following step by step the analogous argument carried out in \cite{hamkins:resurrection_uplifting}. The second approach considers the $\MK$ model $V_{\kappa+1}$ and expands on the results in \cite{antos:class_forcing} to prove that $\PP^{\Gamma,f}_\kappa$ preserves enough of $\MK$ and behaves well with respect to elementarity (though not all of $\MK$: e.g., the power set axiom is not preserved by $\PP^{\Gamma,f}_\kappa$). Even though the second approach is more general and natural in some sense, the first approach is considerably simpler (modulo certain complications regarding the definability of the forcing relation we will side-step). 
	Thus we will follow the first approach and give the following definition.

	\begin{definition}
		Let $\ap{M,C}$ be a model of $\ZFC$ expanded with an additional class predicate $C$, and let $\PP$ be a class partial order definable in $\ap{M,C}$.
		
		An $\ap{M,C}$-generic filter $G$ for $\PP$ is a filter meeting all dense subclasses of $\PP$ which are definable in $\ap{M,C}$ with parameters.
		
		$\PP$ is \emph{nice for forcing} in $\ap{M,C}$ if the forcing relation $\Vdash_\PP$ for formulae of first-order logic (i.e., with no class variables) is definable in $\ap{M,C}$ and the forcing theorem holds, i.e., for every first-order formula $\phi$ with parameters in $M \cup \bp{C}$ and every $\ap{M,C}$-generic filter $G$ for $\PP$,
		\[
		\ap{M[G],C} \models \phi ~\Leftrightarrow~ \exists p \in G ~ \ap{M,C} 
		\models \cp{p \Vdash_\PP \phi}.
		\]
	\end{definition}

	Note that the definition is interesting only when $\PP$ is a proper class forcing with respect to $M$, 
	and so $\PP \notin M$. The usual arguments regarding the existence or non-existence of 
	$\ap{M,C}$-generic filters apply: i.e. $\ap{V,C}$-generic filters do not exists when $V$ is the universe of sets and 
	$C$ is the collection of all proper classes, and $\ap{M,C}$-generic filters do exist if $M\cup C$ is a countable set.
	
	The next lemmas provide a sufficient condition for being nice for forcing in $H_\lambda$.
	The Lemmas hide certain delicate points which concern the definability of the forcing relation for class forcings in
	models of $\ZF$. While the forcing relation for a class forcing is always definable in models of 
	$\mathsf{MK}$ (see~\cite{antos:class_forcing}), this is not in general true for definable class forcings in
	models of $\ZFC$
	(see~~\cite[Theorem 1.3]{HOLKRALUCNJESCH16}),  but it is the case
	for the type of class forcings we are interested in (see~\cite{HOLKRASCH17}).

	\begin{lemma} \label{lm:hkpresat}
		Let $\lambda$ be a regular cardinal in $V$ and let $\PP \subseteq H_\lambda$ be a partial order 
		preserving the regularity of $\lambda$. Assume $G$ is $V$-generic for $\PP$. Then
		\[
		H_\lambda[G] =\{\val_G(\dot{x}): \dot{x} \in V^{\PP}\cap H_\lambda\}= H_\lambda^{V[G]}.
		\]
	\end{lemma}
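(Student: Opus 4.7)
Plan. The first equality is just the unfolding of the notation $H_\lambda[G]$; the substance of the lemma lies in the second equality. I will prove the two inclusions separately.

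For the easy containment $\{\tau_G : \tau \in V^\PP \cap H_\lambda\} \subseteq H_\lambda^{V[G]}$, I would observe that $\trcl(\tau_G) \subseteq \{\sigma_G : \sigma \in \trcl(\tau)\}$, so $|\trcl(\tau_G)|^{V[G]}$ is bounded above by $|\trcl(\tau)|^V$, which is an ordinal strictly less than $\lambda$. Since $\lambda$ remains a cardinal in $V[G]$ (regularity being preserved), this bound survives in $V[G]$ and we obtain $\tau_G \in H_\lambda^{V[G]}$.

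The substantive containment $H_\lambda^{V[G]} \subseteq \{\tau_G : \tau \in V^\PP \cap H_\lambda\}$ I would establish by $\in$-induction on $x \in H_\lambda^{V[G]}$. Assuming every $y \in \trcl(x)$ already admits a name in $V^\PP \cap H_\lambda$, and fixing an arbitrary $\PP$-name $\dot{x}$ for $x$ in $V$, the natural candidate
\[
\tau = \{(\sigma, p) : \sigma \in V^\PP \cap H_\lambda,\ p \in \PP,\ p \Vdash \sigma \in \dot{x}\}
\]
satisfies $\tau_G = x$: the inclusion $\tau_G \subseteq x$ is immediate from the forcing theorem, and the reverse uses the inductive hypothesis together with genericity to produce, for each $y \in x$, a hypothesized name $\dot{y} \in H_\lambda$ and a condition $p \in G$ forcing $\dot{y} \in \dot{x}$.

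The main obstacle is to ensure $\tau$ itself can be chosen inside $H_\lambda^V$, since as defined it may have transitive closure of size $\geq \lambda$. Here the two hypotheses $\PP \subseteq H_\lambda$ and preservation of regularity of $\lambda$ must work in tandem to force a suitable chain-condition behavior; combined with $|x|^{V[G]} < \lambda$, this lets one trim $\tau$ via a standard nice-name/mixing argument, selecting for each $y \in x$ a single name $\dot{y} \in H_\lambda$ together with a maximal antichain of conditions of size $< \lambda$ deciding ``$\dot{y} \in \dot{x}$'', and consolidating these into a single $\tau \in H_\lambda^V$. Arranging this selection inside $V$ so that the resulting consolidated name still has transitive closure below $\lambda$ is the delicate step, and it is precisely the point at which the preservation hypothesis on $\PP$ is essentially used.
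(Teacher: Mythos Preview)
Your overall plan is sound and the easy inclusion is handled correctly. The hard inclusion via $\in$-induction can be completed, but the passage you flag as the ``delicate step'' is where all the content lives, and the sketch you give there does not go through as written.

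Two concrete issues. First, you invoke ``a maximal antichain of conditions of size ${<}\lambda$ deciding $\dot{y}\in\dot{x}$''. Nothing in the hypotheses gives $\PP$ the $\lambda$-chain condition (preservation of the regularity of $\lambda$ is strictly weaker), so maximal antichains of size ${<}\lambda$ need not exist. Second, ``selecting for each $y\in x$ a single name $\dot{y}\in H_\lambda$'' is a choice made in $V[G]$; since $\vp{H_\lambda^V}$ may be $2^{<\lambda}>\lambda$, the phrase ``standard nice-name/mixing argument'' does not by itself explain how to pull this selection back to an object of size ${<}\lambda$ in $V$.

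The paper avoids the $\in$-induction altogether. It first codes an arbitrary element of $H_\lambda^{V[G]}$ by a bounded subset of $\lambda$, equivalently by a function $\dot f:\lambda\to 2$ forced to have support contained in some $\alpha<\lambda$. This makes the element-names canonical (just $\check\xi$ for $\xi<\alpha$), so your second issue disappears and only the deciding antichains $A_\xi$ need to be controlled. The actual use of regularity preservation is then the following density claim: one can find $q$ below any given condition such that $\bp{r\in A_\xi:\ r\text{ compatible with }q}$ has size ${<}\lambda$ for every $\xi<\alpha$. (The point is that $\PP\subseteq H_\lambda$ can be enumerated in type $\lambda$, and in $V[G]$ the map sending each $\xi<\alpha$ to the index of the unique element of $A_\xi\cap G$ is bounded below $\lambda$ by regularity; any $q\in G$ forcing this bound works.) From these small pieces one assembles a name in $H_\lambda$ below $q$.

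The same antichain-bounding trick would close the gap in your $\in$-induction as well (applied to antichains of conditions each forcing the $\eta$-th element of $\dot x$ to equal some fixed $\sigma\in H_\lambda\cap V^\PP$), but the preliminary coding step makes the argument cleaner and sidesteps a secondary bookkeeping issue of uniformly bounding $\vp{\trcl(\dot y)}$ over the chosen names.
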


	The above Lemma is rather standard but we sketch a proof since we cannot find a precise reference for it.

	\begin{proof}
		Since every element of $H_\lambda$ with $\lambda$ regular is coded with a bounded subset of $\lambda$, and $\PP$ preserves the regularity of $\lambda$, we can assume that every $\PP$-name for an element of $H_{\lambda}^{V[G]}$ is coded by a $\PP$-name for a function $\dot{f}:\lambda\to 2$ such that $\dot{f}$ is allowed to assume the value $1$ only on a bounded subset of $\lambda$. 
		In particular we let for any such $\dot{f}$,
		\[
		D_{\dot{f}}=\bp{p\in\PP: ~ \exists\alpha_p ~~ p \Vdash \dot{f}^{-1}[\{1\}]\subseteq\alpha_p}
		\]
		and for all $\xi<\lambda$,
		\[
		E_{\xi,\dot{f}} = \bp{p\in\PP: ~ \exists i<2 ~~ p \Vdash \dot{f}(\xi)=i}
		\]
		Notice that the above sets are open dense for any $\xi,\dot{f}$, and also that
		$p\in D_{\dot{f}}$ as witnessed by $\alpha_p$ implies that $p\in E_{\xi,\dot{f}}$ for all $\xi\geq\alpha_p$. 
		In particular to decide the values of $\dot{f}$ below any $p\in D_{\dot{f}}$ 
		we just need to consider the dense sets $E_{\xi,\dot{f}}$ for $\xi<\alpha_p$.

		Let $p \in \PP$ be arbitrary, and let $A_\xi\subseteq E_{\xi,\dot{f}}\cap D_{\dot{f}}$ be maximal antichains
		for all $\xi<\alpha_p$. Since $\PP$ preserves the regularity of $\lambda$, it is ${<}\lambda$ presaturated hence we can find $q\leq p$ such that $q\in D_{\dot{f}}$ as witnessed by $\alpha_p$ and
		\[
		B_\xi = \bp{ r\in A_\xi: r\text{ is compatible with }q}
		\]
		has size less than $\lambda$ for all $\xi<\alpha_p$. We can now use these antichains 
		$B_\xi$ to cook up a name $\dot{g}_q\in H_\lambda\cap V^{\PP}$ such that $q$ forces that $\dot{f}=\dot{g}_q$.
		By standard density arguments, the thesis follows.
	\end{proof}

	\begin{lemma} \label{lm:nice}
		Assume $\kappa$ is inaccessible.
		Let $\PP \subseteq H_\kappa$ be a partial order preserving the regularity of $\kappa$ that is definable in $\ap{H_\kappa,C}$. Then $\PP$ is nice for forcing in $\ap{H_\kappa,C}$.
	\end{lemma}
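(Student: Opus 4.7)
The plan is to carry out the standard recursive construction of the forcing relation inside $\ap{H_\kappa,C}$ and then verify the forcing theorem by appealing to Lemma \ref{lm:hkpresat} to reduce quantification over $H_\kappa[G]$ to quantification over $\PP$-names living in $H_\kappa$. The point is that, although $\PP$ is a priori a class with respect to $H_\kappa$, its elements and the names we need are actually sets of $H_\kappa$, so the obstruction that usually affects class forcing simply disappears.

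First I would define $\Vdash_\PP$ by recursion on formula complexity, restricting attention to the class $V^\PP \cap H_\kappa$ of $\PP$-names contained in $H_\kappa$; this class is definable in $\ap{H_\kappa,C}$ since $\PP$ is. For the atomic formulas $\dot x \in \dot y$ and $\dot x = \dot y$ the standard recursion on the ranks of $\dot x, \dot y$ takes place entirely within $H_\kappa$, where transfinite recursion of length below $\kappa$ is available, yielding an $\ap{H_\kappa,C}$-definable relation. Boolean connectives are handled in the obvious way. For the existential quantifier we set
\[
p \Vdash_\PP \exists x\,\phi(x) \iff \bp{q \leq p: ~ \exists \dot x \in V^\PP \cap H_\kappa ~ q \Vdash_\PP \phi(\dot x)} \text{ is dense below } p,
\]
and note that the defining condition is a first-order formula in $\ap{H_\kappa,C}$.

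Next I would prove the forcing theorem by induction on $\phi$. The atomic and propositional cases go through by the usual argument entirely within $H_\kappa$. For the quantifier case, suppose $\ap{H_\kappa[G],C} \models \exists x\,\phi(x)$: by Lemma \ref{lm:hkpresat} the witness can be chosen as $\val_G(\dot x)$ for some $\dot x \in V^\PP \cap H_\kappa$, and by inductive hypothesis some $q \in G$ forces $\phi(\dot x)$, so the dense set appearing in the definition of $p \Vdash_\PP \exists x\,\phi(x)$ meets $G$. Conversely, if some $p \in G$ forces $\exists x\,\phi(x)$, then by genericity $G$ meets the corresponding dense set, supplying a name $\dot x \in H_\kappa$ with $\val_G(\dot x)$ witnessing $\phi$ in $\ap{H_\kappa[G],C}$.

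The main obstacle I expect is bookkeeping: one has to check that every step of the standard set-forcing argument for the definability and truth of $\Vdash_\PP$ survives when the underlying model is $H_\kappa$ rather than $V$, and that all the dense classes needed during the induction are definable in $\ap{H_\kappa,C}$ (possibly using $C$ as a parameter). The crucial fact underlying the whole argument is Lemma \ref{lm:hkpresat}, which ensures that the extension $H_\kappa[G]$ is captured by names in $H_\kappa$, so that the quantifier clause in the definition of $\Vdash_\PP$ suffices to witness truth in $\ap{H_\kappa[G], C}$; without this, the forcing theorem for existential formulas would fail in general class-forcing situations.
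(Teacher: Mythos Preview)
Your proposal is correct and follows essentially the same approach as the paper: define the forcing relation restricted to names in $H_\kappa\cap V^{\PP}$, then prove the forcing theorem by induction on formulas, with Lemma~\ref{lm:hkpresat} supplying the crucial reduction in the existential-quantifier step. The only cosmetic difference is that the paper, rather than verifying the forcing theorem directly, shows by induction that the internally defined relation $\Vdash_\kappa$ coincides with the relativized $V$-relation $p\Vdash\phi^{\dot H_\kappa}$ and then invokes the already-established forcing theorem in $V$; this saves some routine work in the atomic case but is not a substantive change.
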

	\begin{proof}
		Since $\PP$ is a definable class in $\ap{H_\kappa,C}$, $\kappa$ is inaccessible and $\PP$ preserves 
	the regularity of $\kappa$, by~\cite[Theorem 2.4, Lemma 2.6]{HOLKRASCH17}, the corresponding forcing 
	relation $\Vdash_\kappa$ between elements of $\PP$ and formulae with parameters in $H_\kappa\cap V^{\PP}$ 
	whose quantifiers range only over the $\PP$-names in $H_\kappa$ is definable in $\ap{H_\kappa,C}$.
		Moreover, we can prove by induction on $\phi$ that this relation coincides with the forcing relation as 
		calculated in $V$, i.e., 
		\[
		V\models p \Vdash_\PP \phi^{\dot{H}_\kappa} ~ \Leftrightarrow ~ (H_\kappa,C)\models p \Vdash_\kappa \phi.
		\]
		
		First, assume that $\phi \equiv \sigma_0 \mathrel{R} \sigma_1$ is an atomic formula with 
		$\sigma_0, \sigma_1 \in H_\kappa$ and $R$ among $\in,=,\subseteq$. 
		We prove this case by induction on the pairs $\ap{\rank(\sigma_0),\rank(\sigma_1)}$ with the square order, 
		as in the proof of the forcing theorem. We handle with some care the case $\sigma_0 \in \sigma_1$ 
		and leave the other cases to the reader.
		\begin{align*}
			p \Vdash_\PP \sigma_0\in\sigma_1 &\Leftrightarrow \forall q \leq p ~ \exists \ap{\tau,s} \in \sigma_1 ~ 
			\exists r \leq q,s ~ r \Vdash_\PP \sigma_0 = \tau \Leftrightarrow \\
			&\Leftrightarrow \forall q \leq p ~ \exists \ap{\tau,s} \in \sigma_1 ~ 
			\exists r \leq q,s ~ r \Vdash_\kappa \sigma_0 = \tau \Leftrightarrow
			p \Vdash_\kappa \sigma_0\in\sigma_1
		\end{align*}
		using in the second to third equivalence the inductive assumption on the pairs $\ap{\rank(\sigma_0),\rank(\tau)}$ as $\tau$ ranges in $\dom(\sigma_1)$. The case of propositional connectives is easily handled, so we sketch the case $\phi \equiv \exists x \psi(x)$. Using Lemma \ref{lm:hkpresat},
		\[
		\begin{array}{lll}
			p \Vdash_\PP \phi^{\dot{H}_\kappa} &\Leftrightarrow& \bp{q \leq p: ~ \exists \tau \in V^\PP ~ 
			q \Vdash_\PP \psi(\tau)^{\dot{H}_\kappa} \wedge \tau \in \dot{H}_\kappa} \text{ is open dense} \\
			&\Leftrightarrow& \bp{q \leq p: ~ \exists \sigma_q \in H_\kappa\cap V^\PP ~ q \Vdash_\PP \psi(\sigma_q)^{\dot{H}_\kappa} } \text{ is open dense} \\
			&\Leftrightarrow& \bp{q \leq p: ~ \exists \sigma_q \in H_\kappa\cap V^\PP ~ q \Vdash_\kappa \psi(\sigma_q)^{\dot{H}_\kappa} } \text{ is open dense} \\
			&\Leftrightarrow& \cp{p \Vdash_\kappa \phi}^{H_\kappa}
		\end{array}
		\]
		where in the second to third equivalence we used that the intersection of two open dense sets is open dense. The thesis follows.
	\end{proof}

	\begin{lemma}[Lifting Lemma{, \cite[Lemma 17]{hamkins:resurrection_uplifting}}] \label{lm:lifting}
		Let $\ap{M,C} \prec \ap{M',C'}$ be transitive models of $\ZFC$ expanded with additional class predicates 
		$C$ and $C'$ (i.e., the inclusion map of $M$ in $M'$ extended with the assignment 
		$C\mapsto C'$ is elementary). 
		Let $\PP$ be  a definable class poset in $\ap{M,C}$ that is nice for forcing. 
		Let $\PP'$ be defined by the same formula in $\ap{M',C'}$ (obtained replacing $C$ with $C'$), 
		and suppose that $\PP'$ is also nice for forcing.

		Then for any $G$ $\ap{M,C}$-generic for $\PP$ and $G'$ $\ap{M',C'}$-generic for $\PP'$ such that $G' \cap M = G$, we have that $\ap{M[G],C,G} \prec \ap{M'[G'],C',G'}$.
	\end{lemma}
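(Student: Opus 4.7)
The plan is to show directly, for every first-order formula $\phi$ with $\PP$-name parameters $\vec\tau \in M^\PP$, that $\ap{M,C}[G] \models \phi(\vec\tau_G)$ iff $\ap{M',C'}[G'] \models \phi(\vec\tau_{G'})$, leaning entirely on the forcing theorem provided by the niceness hypothesis rather than on any induction on the complexity of $\phi$. Since every element of $\ap{M,C}[G]$ is named by some $\tau \in M^\PP$, this suffices for elementarity. I would begin with the routine observation that $\tau_G = \tau_{G'}$ for every $\tau \in M^\PP$, by $\in$-induction on $\tau$: every condition in $\dom(\tau)$ lies in $M$, and by assumption $G = G' \cap M$.

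The key reduction is to apply niceness on both sides to obtain
$$\ap{M,C}[G] \models \phi(\vec\tau_G) \iff \exists p \in G\ \ap{M,C} \models \bigl(p \Vdash_\PP \phi(\vec\tau)\bigr),$$
together with the analogous equivalence for $\ap{M',C'}[G']$. Since $\PP'$ and $\Vdash_{\PP'}$ are defined in $\ap{M',C'}$ by the same formulas defining $\PP$ and $\Vdash_\PP$ in $\ap{M,C}$, the elementarity $\ap{M,C} \prec \ap{M',C'}$ transfers statements of the form $p \Vdash \phi(\vec\tau)$ between the two structures whenever $p, \vec\tau \in M$. The forward direction is then immediate: a witness $p \in G \subseteq M$ also lies in $G'$, and elementarity plus the forcing theorem on the primed side delivers $\ap{M',C'}[G'] \models \phi(\vec\tau_{G'})$.

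The converse is the main point and requires a density argument, since a witness $p \in G'$ forcing $\phi(\vec\tau)$ need not lie in $M$. I would consider
$$D_\phi = \bp{q \in \PP : \ap{M,C} \models \bigl(q \Vdash_\PP \phi(\vec\tau) \vee q \Vdash_\PP \neg\phi(\vec\tau)\bigr)},$$
which is definable in $\ap{M,C}$ and dense in $\PP$ (the standard consequence of the forcing theorem: any $q$ not forcing $\phi$ has an extension forcing $\neg\phi$). By $\ap{M,C}$-genericity of $G$ there is $q \in G \cap D_\phi$; if $q$ forced $\neg\phi(\vec\tau)$ in $\ap{M,C}$ then elementarity together with $q \in G \subseteq G'$ would falsify $\phi(\vec\tau_{G'})$ in $\ap{M',C'}[G']$, contrary to hypothesis. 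Hence $q \Vdash_\PP \phi(\vec\tau)$ in $\ap{M,C}$, and the forcing theorem on the unprimed side yields $\ap{M,C}[G] \models \phi(\vec\tau_G)$. The one delicate point I foresee is the density of $D_\phi$ in a class-forcing context: it reduces to the schema ``$q \not\Vdash \phi$ implies $\exists r \leq q\ r \Vdash \neg\phi$'', which is itself a consequence of the forcing theorem applied to any generic extension passing through $q$, hence is already packaged into the niceness hypothesis. Note that no separate handling of existential formulas is required, since the forcing theorem is invoked as a black box directly on $\phi$.
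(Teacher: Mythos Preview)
The paper does not give its own proof of this lemma; it simply refers the reader to \cite[Lemma~17]{hamkins:resurrection_uplifting}. Your argument is correct and is essentially the standard one found there.

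One small simplification worth noting: the density argument for the converse is unnecessary. Once you have established the forward implication for an arbitrary formula $\psi$ (namely, $\ap{M,C}[G]\models\psi(\vec\tau_G)$ implies $\ap{M',C'}[G']\models\psi(\vec\tau_{G'})$), applying it with $\psi=\neg\phi$ and taking the contrapositive immediately yields the backward direction. This sidesteps the need to verify that $D_\phi$ is dense in $\PP$, which---as you yourself flag---is the only step whose justification in a general class-forcing setting is not entirely routine (it implicitly appeals to the existence of an $\ap{M,C}$-generic through a given condition, something not guaranteed by the abstract hypotheses of the lemma).
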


	We remark that since $\PP^{\Gamma,f}_\kappa$ is definable in $\ap{V_\kappa,f}$, the above results are applicable to this kind of iteration (even though the $\PP^{\Gamma,f}_\kappa$ we will be interested in are non-definable classes in $V_\kappa$ without making use of $f$).

	\begin{theorem} \label{thm:raconsistency}
		Let $\Gamma$ be weakly iterable and well behaved in a theory $T$ extending $\MK$ by a recursive set of first-order axioms. Then $\RA_\alpha(\Gamma)$ is consistent relative to the existence of an $(\alpha)$-uplifting cardinal $\kappa$ such that $V_{\kappa+1} \models T$ in a model of $\MK$.
	\end{theorem}
	\begin{proof}
		The proof follows the one of \cite[Thm.~18]{hamkins:resurrection_uplifting}.
		Let $V$ be the standard model of $\MK$. We prove by induction on $\alpha$ that $\PP_\kappa = \PP^{\Gamma,f}_\kappa$, the lottery iteration of $\Gamma$ relative to a function $f: \kappa \to \kappa$, forces $\RA_\alpha(\Gamma)$ whenever $f$ is Menas for $(\alpha)$-uplifting. Notice that such lottery iteration exists since $V_{\kappa+1} \models T$ and $\Gamma$ is weakly iterable in $T$. By Lemma \ref{lm:menas}, the existence of such an $f$ follows from the existence of an $(\alpha)$-uplifting cardinal, giving the desired result.

		Since $\RA_0(\Gamma)$ holds vacuously true, the thesis holds for $\alpha = 0$.
		Suppose now that $\alpha > 0$. Let $\dot{\QQ} \in V^{\PP_\kappa}$ be a name for a forcing in $\Gamma$, $\beta < \alpha$ be an ordinal. Using the Menas property for $f$, let $g: \lambda \to \lambda$ be such that $\ap{V_{\kappa+1}, f} \prec_{\Delta^1_1} \ap{V_{\lambda+1}, g}$, $g(\kappa) \geq \rank(\dot{\QQ})$ and $g$ is a Menas for $(\beta)$-uplifting function on $\lambda$. Let $\PP_\lambda = \PP^{\Gamma,g}_\lambda$ be the lottery iteration of $\Gamma$ relative to $g$. Notice that such lottery iteration exists since $V_{\lambda+1}$ models $\MK$ and $V_\kappa \prec V_\lambda$ implies that $V_\lambda$ models $T \setminus \MK$.
		
		Since $g \res \kappa = f$, by Proposition \ref{prop:lottery_iteration} we have that:
		\begin{itemize}
			\item $\PP_\kappa$ is ${<}\kappa$-cc and is definable in $\ap{V_\kappa,f}$, thus by Lemma \ref{lm:nice} is nice for forcing. Similarly, $\PP_\lambda$ is ${<}\lambda$-cc and definable in $\ap{V_\lambda,g}$, thus nice for forcing.
			\item $\PP_\kappa$ forces $2^\gamma \leq \kappa=\gamma^+$ and 
			$\PP_\lambda$ forces $2^\gamma \leq \lambda=\gamma^+$, since $\cpd(\Gamma)=\gamma$.
			\item Since $g(\kappa) > \rank(\dot{\QQ})$ and $\dot{\QQ}$ is in $\Gamma^{V^{\PP_\kappa}}$, $\PP_\lambda \res p \leq_\Gamma \PP_\kappa \ast \dot{\QQ}$ for a certain $p \in \PP_\lambda$.
		\end{itemize}
		Furthermore, by inductive hypothesis $\PP_\lambda$ forces $\RA_\beta(\Gamma)$. Thus, we only need to prove that $(H_{\gamma^+})^{V^{\PP_\kappa}} \prec (H_{\gamma^+})^{V^{\PP_\lambda}}$. The thesis will then follow by Proposition \ref{prop:radef}, since $\PP_\lambda \res p$ would be a legal (and winning) move in $\GGG^{\RA}$ after $\PP_\kappa \ast \dot{\QQ}$.

		Let $G$ be any $V$-generic filter for $\PP_\kappa$, $H$ be a $V[G]$-generic filter for $\val_G(\dot{\QQ})$. Since $g(\kappa) \geq \rank(\dot{\QQ})$, $\dot{\QQ}$ is one of the elements of the lottery sum considered at stage $\kappa + 1$ so that $G \ast H$ is $V$-generic for $\PP_\lambda \res \cp{\kappa+1}$. 
		Let $G'$ be $V[G \ast H]$-generic for $\PP_\lambda/G\ast H$.
		Since $\PP_\kappa$, $\PP_\lambda$ are nice for forcing in the respective models and $\ap{H_\kappa,f} \prec \ap{H_\lambda,f}$, we can apply Lemma \ref{lm:lifting} to obtain that $H_\kappa[G] \prec H_\lambda[G \ast H \ast G']$. Furthermore by Lemma \ref{lm:hkpresat},
		\[
		H_\kappa^{V[G]} = H_\kappa[G] \prec H_\lambda[G \ast H \ast G'] = H_\lambda^{V[G \ast H \ast G']}
		\]
		Since $\gamma^+ = \kappa$ in $V[G]$ and $\gamma^+ = \lambda$ in $V[G \ast H \ast G']$, the proof
		is completed.
	\end{proof}
	
	For classes $\Gamma$ that are iterable in $\MK$ alone (i.e., $\ALL$, axiom-$A$, proper, $\SP$, ${<}\kappa$-closed), the above result gives consistency relative to an $(\alpha)$-uplifting cardinal existing in a model of $\MK$, which in turn follows from the existence of a Mahlo cardinal (see Proposition \ref{pr:upliftingmahlo}). For $\Gamma = \SSP$, which is iterable assuming the existence of a proper class of supercompact cardinals, we obtain consistency relative to the existence of a Mahlo cardinal which is a limit of supercompact cardinals.
	This result does not fully match the best known upper bound for the consistency strength of $\RA(\SSP)$ given in \cite[Thm. 3.1]{tsaprounis:on_resurrection}, where it is shown that in the presence of class-many Woodin cardinals $\MM^{++}$ implies $\RA(\SSP)$ (according to Hamkins and Johnstone's terminology). However, it is not clear how this latter result can be generalized to the axioms $\RA_\alpha(\SSP)$ we introduced.

	The first author has shown in \cite{a:thesis} that we can get generic absoluteness for ${<}\kappa$-closed forcings simultaneously for all cardinal $\kappa$.

	\begin{definition}
		Let $\BB$ be a complete Boolean algebra. We define its closure degree $\cd(\BB)$ as the largest cardinal $\kappa$ such that there exists a dense subset $D \subseteq \BB$ which is a ${<}\kappa$-closed poset.
		We denote as $\Gamma_\kappa$ the class of $\BB$ with $\cd(\BB)=\kappa$.
	\end{definition}
	
	\begin{theorem}[Audrito~\cite{a:thesis}] \label{thm:rak_everywhere}
		It is consistent relative to a Mahlo cardinal that the axioms $\RA_\ON(\Gamma_\kappa)$ hold simultaneously for all cardinals $\kappa$.
	\end{theorem}
	If we first force $\RA_{\ON}(\SSP)$ over a model with class-many Woodin cardinals (see Proposition \ref{prop:woodinrall}) and then apply the above result restricted to $\kappa \geq \omega_2$, we also get the consistency of $\RA_\ON(\ALL) + \RA_{\ON}(\SSP) + \forall \kappa \geq \omega_2 \RA_\ON(\Gamma_\kappa)$.

	\section{Concluding remarks and open problems} \label{sec:resurrection_open}

First of all we want to bring forward a few remarks regarding the philosophical import of the iterated resurrection axioms. There is a surprising and converging set of arguments which outline that forcing axioms, generic absoluteness results, Baire's category theorem, and the axiom of choice are different sides of the same coin: each of these principles provide non-constructive methods which can be employed successfully in the analysis of mathematical problems. 
A detailed account of the web of relations, implications, and equivalences between these different principles can be found in~\cite{a:viale:useful_axioms}. For the moment let us remark that Todor\v{c}evi\'c has shown that the axiom of choice can be equivalently formulated over the theory $\ZF$ as the assertion that $\FA_\kappa(\Gamma_\kappa)$ holds for all cardinals $\kappa$ (where $\Gamma_\kappa$ is the class of ${<}\kappa$-closed forcings, see~\cite{a:viale:useful_axioms}).
In particular any forcing axiom implying $\FA_\kappa(\Gamma_\kappa)$ can be regarded as a (local) strengthening of the axiom of choice.
In this regard the axioms $\RA_\alpha(\Gamma)$ for $\Gamma\supseteq\Gamma_\kappa$ appear to be natural companions of the axiom of choice, while the axioms $\RA_\ON(\ALL)$ and $\RA_\ON(\SSP)+\MM$ (or $\MM^{+++}$) 
are natural maximal strengthenings 
of the axiom of choice at the levels $\omega$ and $\omega_1$.
Hence it is in our opinion natural to try to isolate classes of forcings $\Delta_\kappa$ as $\kappa$ ranges among the cardinals such that:
\begin{enumerate}[(a)]
\item $\kappa=\cpd(\Delta_\kappa)$ for all $\kappa$.
\item $\Delta_\kappa\supseteq \Gamma_\kappa$ for all $\kappa$.
\item $\FA_\kappa(\Delta_\kappa)$ and $\RA_\omega(\Delta_\kappa)$ are simultaneously consistent for all $\kappa$.
\item For all cardinals $\kappa$, $\Delta_\kappa$ is the largest possible $\Gamma$ with $\cpd(\Gamma)=\kappa$ for which  $\FA_\kappa(\Delta_\kappa)$ and $\RA_\omega(\Delta_\kappa)$ are simultaneously 
consistent (and if possible for all $\kappa$ simultaneously). 
\end{enumerate}
Compare the above requests with requirements (3) and (4) in the discussion motivating the introduction of the iterated resurrection axioms
on page~\pageref{aim:resurrection-1}. In this regard it appears that we have now a completely satisfactory answer on what are $\Delta_{\omega}$ and $\Delta_{\omega_1}$: i.e., respectively the class of \emph{all} forcing notions and the class of all $\SSP$-forcing notions.

\begin{question}
What is $\Delta_{\omega_2}$ (or more generally $\Delta_{\kappa}$ for $\kappa>\omega_1$)? 
Which criteria can bring us to isolate it?
\end{question}
The results of this paper outline that any interesting iteration theorem for a class $\Gamma\supseteq \Gamma_{\omega_2}$ closed under two step iterations, can be used to prove that $\RA_\ON(\Gamma)$ is consistent relative to suitable large cardinal assumptions and that it freezes the theory of $H_{\omega_3}$ with respect to forcings in $\Gamma$ preserving $\RA_\omega(\Gamma)$.
It is nonetheless still a mystery which classes $\Gamma\supseteq\Gamma_{\omega_2}$ can give us
a nice iteration theorem, even if the recent works, by Neeman, Asper\`o, Krueger,
Mota, Velickovic and others are starting to shed some light on this problem (see among others ~\cite{krueger:forcingclubaleph2,krueger:mota:stronglyproper,neeman:forcingaleph2}).

We can dare to be more ambitious and replicate the above type of issue at a much higher level of the set theoretic hierarchy.
There is a growing set of results regarding the first-order theory of $L(V_{\lambda+1})$ assuming $\lambda$ is a very large cardinal
(i.e., for example admitting an elementary $j: L(V_{\lambda+1})\to L(V_{\lambda+1})$ 
with critical point smaller than $\lambda$, see among others~\cite{dimonte:I0GCH,dimonte:I0,woodin:beyondI0}). 
It appears that large fragments of this theory are generically invariant with respect to a great variety of forcings.

\begin{question}
Assume $j:L(V_{\lambda+1})\to L(V_{\lambda+1})$ is
elementary with critical point smaller than $\lambda$ .
Can any of the results of the present paper be of use in the study of which type of 
generic absoluteness results may hold at the level of $L(V_{\lambda+1})$?
\end{question}

Next, we address the possibility of an extension of our results on \emph{clopen} class games and iterated resurrection to \emph{open} class games. In fact, open class games are also provably determined assuming $\MK$. It is therefore natural to inquire what are the consequences of the corresponding resurrection axioms and whether their consistency can be proved along the same lines.

\begin{definition}
	The long $\Gamma$-resurrection game $\GGG^{\RA_\infty}$ is the game of length $\omega$ defined as follows. Player I (\emph{Kill}) plays Boolean algebras $\BB_{n+1}$ such that $\BB_{n+1} \leq_\Gamma \CC_n$. Player II (\emph{Resurrect}) plays Boolean algebras $\CC_n$ such that $H_{\gamma^+}^{\CC_n} \prec H_{\gamma^+}^{\CC_{n+1}}$ and $\CC_n \leq_\Gamma \BB_n$. Player II loses if it cannot move, and wins if the game reaches full length.
	\[
	\xymatrixcolsep{1pc}
	\xymatrix{
	\text{I (\emph{Kill}).\phantom{aaaaai}} &  \BB_0 \ar@{->}[rd] & & \BB_1 \ar@{-->}[rd] & \\
	\text{II (\emph{Resurrect}).} & & \CC_0 \ar@{->}[ru] & & \quad \quad \ldots
	}
	\]
	$\RA_\infty(\Gamma)$ is the assertion that Player II (\emph{Resurrect}) wins the long $\Gamma$-resurrection game after $\ap{\2, \2}$.
\end{definition}

Since the game  $\GGG^{\RA_\infty}$ is open, it is still determined in $\MK$ and a theory similar to that of $\GGG^{\RA}$ can be carried out. Furthermore, the definition of $(\infty)$-uplifting cardinal can be given and proved to follow from a Mahlo cardinal, while an easy adaptation of the results in this paper can show that $\RA_{\infty}(\Gamma)$ is consistent relative to a Menas for $(\infty)$-uplifting cardinal. However, it is not clear how to obtain a Menas for $(\infty)$-uplifting cardinal from an $(\infty)$-uplifting cardinal.

\begin{question}
	What is the consistency strength of $\RA_{\infty}(\Gamma)$?
	Can $\RA_{\infty}(\Gamma)$ or $\RA_{\ON}(\Gamma)$ entail stronger generic absoluteness properties than $\RA_\omega(\Gamma)$?
\end{question}

Finally the present results leave a somewhat narrow gap between the upper bounds for the iterated resurrection axioms and their lower bounds. Hence the following question is still open.

\begin{question}
	What is the exact consistency strength of $\RA_\ON(\Gamma)$ for the different classes $\Gamma$ for which the axiom is consistent? Are these axioms consistency-wise strictly stronger than
	Hamkins and Johnstone resurrection axioms in terms of their large cardinal strength?
\end{question}

\subsection*{Acknowledgements} 
The second author acknowledges support from the PRIN2012 Grant ``Logic, Models and Sets"
(2012LZEBFL), the GNSAGA,
and the Junior PI San Paolo grant 2012 NPOI (TO-Call1-2012-0076).
This research was completed whilst the second author was a visiting fellow at the 
Isaac Newton Institute for Mathematical Sciences in the programme ``Mathematical, 
Foundational and Computational Aspects of the Higher Infinite'' (HIF) funded by EPSRC grant EP/K032208/1. 

	\bibliographystyle{plain}
	\bibliography{Bibliography}
\end{document}